\documentclass[12pt,reqno]{amsart}
\usepackage{amsfonts}
\usepackage{pdfsync}
\usepackage{amssymb, amsmath}
 \usepackage{graphicx,graphics,xcolor,epsfig,setspace}
\usepackage[section]{placeins}
\usepackage{comment}
\usepackage{marginnote}

\def\sqr#1#2{{\vcenter{\vbox{\hrule height.#2pt
        \hbox{\vrule width.#2pt height#1pt \kern#2pt
        \vrule width.#2pt}
        \hrule height.#2pt}}}}

\numberwithin{equation}{section}
\newcommand{\nc}{\newcommand}

\nc{\parent}[1]{$[\![#1]\!]$}
\newtheorem{theorem}{Theorem}[section]
\newtheorem{lemma}[theorem]{Lemma}
\newtheorem{example}[theorem]{Example}
\newtheorem{corollary}[theorem]{Corollary}
\newtheorem{proposition}[theorem]{Proposition}
\newtheorem{remark}[theorem]{Remark}

\newtheorem{assumption}[theorem]{Assumption}

\newenvironment{pf-main}{{\bf \sc Proof of Theorem \ref{mainresult}.}\hspace{3mm}}{\qed}

\nc{\cadlag}{c\`{a}dl\`{a}g } \nc{\ba}{\begin{array}}
\nc{\ea}{\end{array}} \nc{\be}{\begin{equation}}
\nc{\ee}{\end{equation}} \nc{\bea}{\begin{eqnarray}}
\nc{\eea}{\end{eqnarray}} \nc{\bean}{\begin{eqnarray*}}
\nc{\eean}{\end{eqnarray*}} 
\nc{\bu}{\bullet} \nc{\nn}{\nonumber}
\nc{\cA}{{\mathcal A}} 
\nc{\cB}{{\mathcal B}} 
\nc{\cC}{{\mathcal C}} 
\nc{\bfE}{\mathbf{E}}
\nc{\cD}{{\mathcal D}} 
\nc{\bbD}{\mathbb{D}}
\nc{\bbH}{\mathbb{H}}
\nc{\bbF}{\mathbb{F}}
\nc{\bbG}{\mathbb{G}}
\nc{\cG}{{\mathcal G}} 
\nc{\cF}{{\mathcal F}}
\nc{\cS}{{\mathcal S}} 
\nc{\cU}{{\mathcal U}} 
\nc{\cH}{{\mathcal H}}
\nc{\cK}{{\mathcal K}} 
\nc{\cL}{{\mathcal L}} 
\nc{\cM}{{\mathcal M}}
\nc{\cO}{{\mathcal O}} 
\nc{\cP}{{\mathcal P}} 
\nc{\cQ}{{\mathcal Q}} 
\nc{\bbE}{\mathbb{E}}
\nc{\bbEQ}{\mathbb{E}_{\mathbb{Q}}} 
\nc{\eps}{\varepsilon}
\nc{\bbEP}{\mathbb{E}_{\mathbb{P}}}
\nc{\bbL}{\mathbb{L}}
\nc{\bbP}{\mathbb{P}} 
\nc{\bbQ}{\mathbb{Q}} 
\nc{\del}{\partial}
\nc{\Om}{\Omega} 
\nc{\om}{\omega} 
\nc{\bbR}{\mathbb{R}}
\nc{\bbC}{\mathbb{C}} 
\nc{\bfr}{\begin{flushright}}
\nc{\efr}{\end{flushright}} 
\nc{\dXt}{\Delta X_{t}} 
\nc{\dXs}{\Delta
X_{s}} \nc{\bs}{\blacksquare} 
\nc{\dX}{\Delta X} 
\nc{\dY}{\Delta Y}
\nc{\dnkx}{\left(X(T^{n}_{k})-X(T^{n}_{k-1})\right)}
\nc{\esssup}{\mathrm{ess}\mbox{ }\mathrm{sup}}
\nc{\essinf}{\mathrm{ess}\mbox{ } \mathrm{inf}}
\nc{\dhats}{\widehat{\delta_s}} 
\nc{\tX}{\tilde{X}}
\nc{\tZ}{\tilde{Z}}
\nc{\what}{\widehat}
 
\nc{\half}{\frac{1}{2}}

\def\rar{\rightarrow} 
\nc{\uar}{\uparrow}
\nc{\sch}{{Schr\"odinger }}

\nc{\sgn}{\mbox{sgn}}
\nc{\chf}{\mbox{$\mathbf1$}} \nc{\eid}{\stackrel{d}{=}}

\date{\today }

\begin{document}

\title{Schr\"odinger's problem with constraints}
\author{Beatrice Acciaio}
\address{Department of Mathematics,	ETH, R\"amistrasse 101, 8092 Zurich}
\email{beatrice.acciaio@math.ethz.ch}
\author{Umut \c{C}et\.in}
\address{Department of Statistics, London School of Economics and Political Science, 10 Houghton St., London, WC2A 2AE}
\email{u.cetin@lse.ac.uk}
\maketitle

\begin{abstract}
Motivated by the connection between the Kyle equilibrium with static private signal and  the Brownian bridge, we study a much broader class of bridges that allow one to consider more general equilibrium models, for example ones including trading costs and default risk. We show that such bridges are solutions to problems of the Schr\"odinger-type. Leveraging this connection, we obtain that the equilibria in models with trading costs converge to equilibria in the classical Kyle model.\\

\noindent \emph{Keywords:} Kyle-Back model; Markov bridges; \sch problem; progressive enlargement of filtrations; weak convergence.
\end{abstract}

\section{Introduction}
In the present paper, we connect 
continuous-time models of insider trading and price formation (such as the Kyle's model) to the optimal transport theory, and show that market equilibria can be characterized as solutions to specific 2-dimensional \sch bridges. In order to be able to include defaultable assets, we study \sch bridge problems for killed time-inhomogeneous diffusion process. 

We start by recalling some basic facts about classical Kyle's model and its extension to include transaction costs, and from there introduce our approach via \sch bridges.\\

\paragraph*{\bf The classical Kyle's model} \label{s:kyle}
Introduced in \cite{Kyle}, Kyle's model is the canonical model in market microstructure to analyse dissemination of private information of traders to market prices and the associated price impact. In its continuous time formalisation by Back \cite{Back92},  the fundamental value of the traded asset is given by some random variable $V$, which will become public knowledge at some future deterministic date to all market participants.  Its price is determined in  equilibrium  by taking into account the interaction between three types of market participants: noise traders, market makers, and the insider. While the noise traders are non-strategic and have their cumulative demand  given by some Brownian motion $B$, the insider has the objective of maximizing  expected profit from trading given her advance  knowledge of $V$ and taking into account the price impact of trades. The market makers are competitive and set the price given the current and past levels of total demand, without being able to distinguish the noise demand from the informed one.  

The literature on the Kyle model and its continuous time extension is vast. We refer to the book of \c{C}etin and Danilova \cite{DMB-CD} for the background material on the classical model  and its  several extensions from the point of view of potential theory of Markov processes and stochastic filtering.  Notable works in continuous time include \cite{choRA}, \cite{D}, \cite{CDRA}, \cite{CetRH}, \cite{BP98}, \cite{CX13}, \cite{EMZsv}, \cite{ma2018kyle}, \cite{choilarsenkyle}, \cite{CSLtarget}, and \cite{QZnewkyle}. More recently, starting with the work of Bose and Ekren \cite{BERA} and \cite{BEmultdRA}, the optimal transport theory has been applied to solve some open problems in the Kyle model in particular multidimensional settings (see also Back et al. \cite{BackEkren} and \c{C}etin \cite{KpenC}). In the present paper, we also connect optimal transport to equilibrium models, but from
a different perspective, with the main goal of revisiting the Kyle-Back model from the lens of \sch bridges.

Assume that noise traders' demand is given by a Brownian motion $B$ as in \cite{Back92}, and the fundamental value of the asset is given by  $V=F(Z_1)$ for some strictly increasing function $F$ and a standard Normal random variable $Z_1$ (which is without loss of generality if $V$ has a continuous distribution). If the insider perfectly observes $V$, \cite{Back92} shows (see also Section 7 of \cite{DMB-CD} for a proof in a more general case) that, in equilibrium, the total demand process $Y$ in insider's filtration follows the dynamics
\be \label{e:kyleB}
dY_t=dB_t +\frac{Z_1-Y_t}{1-t}dt.
\ee 
In particular, $Y_1=Z_1$.  Moreover, $Y$ is a Brownian motion in its own filtration; that is, the insider maintains the same distribution for the total demand in equilibrium as that of noise.
\\

\paragraph*{\bf Including default risk} One can also consider the case of a defaultable asset within this equilibrium framework. To this end, suppose $Z$ is a Brownian motion killed at $0$, with $Z_0=z_0>0$, and the insider knows perfectly $Z_1$. The fundamental value $V$ is still given by $F(Z_1)$ with $F(0)=0$; that is, the asset pays nothing in case default happens before time $1$. Supposing that the market only observes the total demand process $Y$, one can show (see Chapter 8 of \cite{DMB-CD} for the methodology) that, in equilibrium, the total demand process follows the dynamics
\be \label{e:eqdemdef}
dY_t = dB_t +\Big(\chf_{Z_1>0}\frac{q_z(t,1;Y_t,Z_1)}{q(t,1;Y_t,Z_1)}+ \chf_{Z_1=0}\frac{L_z(t,1;Y_t)}{L(t,1;Y_t)}\Big)dt,
\ee
where $q$ is the transition density of the killed Brownian motion, $q_z$ the partial derivative of $q(\cdot,\cdot,z,\cdot)$ with respect to $z$, and $L(t, 1; Y_t)$ the likelihood of default before time $1$ given the current demand level at time $t$. We refer the reader to Chapter 8 of \cite{DMB-CD} for a more general setting of the Kyle model with default risk (see also \cite{CCdef} and \cite{CCDdef}).

In the models above, with or without default risk, the trading strategies that are not absolutely continuous are suboptimal for the insider (see \cite{Back92} or \cite{CD-GKB}). Thus, one only needs to consider strategies $\theta$ of the form $d\theta_t=\alpha_t dt$ for some appropriately measurable process $\alpha$. The total demand process is then given by $Y=B+\theta$, and in equilibrium the insider follows a bridge strategy so that the total demand process satisfies $Z_1=Y_1$. 
\\

\paragraph*{\bf Including transaction costs} The above bridge strategies may no longer be feasible if the insider incurs additional transaction costs when submitting her orders. \cite{KpenC} studies this case without default risk, i.e. $Z$ is a standard Brownian motion without killing, when the insider is subject to quadratic transaction costs; that is, the strategy
$\theta_t=\int_0^t \alpha_sds$ is subject to an additional transaction cost of
\[
\frac{\eps}{2}\int_0^1 \alpha^2_tdt
\]
for some $\eps>0$. As constructing a bridge strategy ensuring $Y_1=Z_1$ is infinitely costly under quadratic costs, these are no longer optimal strategies. Indeed, the equilibrium demand under  additonal quadratic costs is given by
\be \label{e:eqdempen}
dY_t= dB_t +\frac{\rho^\eps_y(t,Y_t,V)}{\rho^\eps(t,Y_t,V)}dt,
\ee
where
\be \label{e:rhoeqpen}
\rho^\eps(t,y,v):=\bbE\Big[\exp\Big(\frac{j^\eps(v,B_1)}{\eps}\Big)\Big|B_t=y\Big].
\ee

The function $j^\eps$ above is given by  $j^\eps(v,y):= \phi^\eps(v) +\zeta^\eps(y)+yv$, with  $\phi^\eps$ and $\zeta^\eps$
being  the solutions of the following equations:
\be \label{e:Sch1_intro}
\begin{split}
\int\exp\Big(\frac{yv+\phi^\eps(v)}{\eps}\Big)\nu(dv)=\exp\Big(-\frac{\zeta^\eps(y)}{\eps}\Big)\\
\int_{\bbR}\exp\Big(\frac{yv+\zeta^\eps(y)}{\eps}\Big) \frac{1}{\sqrt{2\pi}}\exp\Big(-\frac{y^2}{2}\Big)dy=\exp\Big(-\frac{\phi^\eps(v)}{\eps}\Big),
\end{split}
\ee
where $\nu$ is the distribution of $V$. See also Qiao and Zhang \cite{QZnewkyle} for a new approach to consider more general transaction costs at the cost of less explicit equilibrium strategies.
\\

\paragraph*{\bf
A new bridge perspective}\label{sec.conn}
In equilibria corresponding to the classical Kyle case \eqref{e:kyleB} as well as the transaction cost case \eqref{e:eqdempen},  the insider's optimal strategy is to control the process $Y$ so that $(Z_1,Y_1)$ has a particular distribution $\mu_1^*$, while $Y$ is a Brownian motion in its own filtration at the same time. In both cases, the equilibrium trading strategy of the insider is given by $d\tilde \theta_t=\tilde\alpha_t dt$, where $\tilde\alpha$ coincides with the drift of a suitable $h$-transformation  by a (space-time) harmonic function $\rho(\cdot, \cdot, v)$ for each realization of $V=v$. The function $\rho$ is the function $\rho^\eps$ given in \eqref{e:rhoeqpen} in the presence of quadratic transaction costs. Otherwise, $\rho(t,y,v)=p(1-t,y,F^{-1}(v))$, where $p$ is the transition density of a standard (un-killed) Brownian motion.  Recall that, in the aforementioned models,  $Z_1=F^{-1}(V)$ is the time-1 value of a  standard Brownian motion.
 
The perspective adopted in this paper is that of reading optimal strategies as bridges of the two-dimensional process $(Z,Y)$ so that its distribution at time $1$ matches the distribution $\mu_1^*$ that will be determined in equilibrium.  Our approach will then involve 2-dimensional \sch bridges between the distributions $\delta_{(z_0,y_0)}$ and 
$\mu_1^*$.

It is important to note that the probability measure $\mu_1^*$  is determined in equilibrium and not given exogenously. More precisely,  denoting by $\gamma$ the distribution of a standard Normal, we have
\[
\mu_1^*(dz,dy)=\mu_1^0(dz,dy):=\gamma(dz)\delta_{z}(dy)\] 
in \eqref{e:kyleB},  when there are no additional transaction costs. On the other hand, when quadratic costs are applicable, as in \eqref{e:eqdempen}, the joint distribution of $V$ and $Y_1$ becomes  
\[
\mu_1^*(dv,dy)=\mu_1^\eps(dz,dy):=\exp(\frac{j^\eps(v,y)}{\eps})\nu(dv)\frac{1}{\sqrt{2\pi}}\exp(-\frac{y^2}{2})dy.
\]
Since $\nu$ is absolutely continuous with respect to the Lebesgue measure, a simple calculation then reveals that
\begin{equation}\label{eq.mu1eps}
\mu_1^\eps(dz,dy)=
\exp\Big(\frac{j^\eps(F(z),y)}{\eps}\Big)\eta(dz) \eta(dy).
\end{equation}
Moreover, defining $\phi^\eps_F(z):=\phi^\eps(F(z))$, we can rewrite the system \eqref{e:Sch1_intro} as
\be \label{e:Sch1_alt}
\begin{split}
\int\exp\Big(\frac{yF(z)+\phi^\eps_F(z)}{\eps}\Big)\eta(dz)=\exp\Big(-\frac{\zeta^\eps(y)}{\eps}\Big)\\
\int_{\bbR}\exp\Big(\frac{yF(z)+\zeta^\eps(y)}{\eps}\Big) \eta(dy)=\exp\Big(-\frac{\phi^\eps_F(z)}{\eps}\Big).
\end{split}
\ee
We will see in Section~\ref{sect.sch} how this system  is a specific case of \sch equations.
\\

\paragraph*{\bf
Main contributions}
We consider a (possibly) killed time-inhomogeneous diffusion process
\[ 
dZ_t=\chf_{Z_t\in E}\big(b(t,Z_t)dt+\sigma(t,Z_t)dW_t\big),
\]
where killing occurs as
soon as $Z$ (that plays the role of the private signal in equilibrium models as described above) exits the set $E$, together with another process $Y$ (that plays the role of the total demand process). We study bridge problems for the pair $(Z,Y)$, introducing variations of the classical \sch problem, in particular restricting to processes having support on $E$, that is,
\begin{equation*}
\mathcal{S}_E(\mu_0,\mu_1):=\inf\left\{H(P|R): P\in\mathcal{P}(\Omega_E)\ \text{ s.t. }\, P_0=\mu_0, P_1=\mu_1\right\},
\end{equation*}
for given initial and terminal distributions $\mu_0,\mu_1$, and reference measure $R$ (see problem \eqref{P:unconstrainedeps}).
We refer to this as `unconstrained' entropy minimization problem.  
We show existence of uniqueness of its solution, as well as a characterization of the corresponding SDE whose unique weak solution has such law (Theorem~\ref{thm:first}).
We then define a `constrained' variation of it, by restricting to distributions that are solutions to a system of SDEs whose drift terms are subject to constraints in line with the assumptions of the  Kyle-Back model. This reads as
\begin{equation*}
\bar{\mathcal{S}}(\mu_0,\mu_1):=\inf\{H(P|R):P\in \bar{\cP}(\mu_1)\},
\end{equation*}
where the set  $\bar{\cP}(\mu_1)$ is a subset of $\{P\in\mathcal{P}(\Omega_E)\ \text{ s.t. }\, P_0=\mu_0, P_1=\mu_1\}$ (see problem \eqref{e:minentadpt}).
It turns out that the constrained and unconstrained problems are in fact equivalent (Theorem~\ref{thm.cunc}), so that the minimal entropy of the unconstrained problem can be achieved by using the distribution of SDE that describes the equilibrium in a Kyle-Back model with transaction costs. This in particular implies that one does not need to control the drifts of both $Z$ and $Y$ to achieve $\mathcal{S}_E(\mu_0,\mu_1)$, which is the characteristic of solutions of the \sch problem in the existing literature. 

We connect those bridge problems to equilibria in the Kyle model in the presence of penalties or transaction costs (Section~\ref{sect:constr}), showing that 
optimal bridges (solutions to the former) corresponds to optimal strategies (solutions to the latter).
When aiming for a similar connection for the case of equilibria in the classical Kyle model, i.e. without penalties, we need
to do this by approximation. This is due to the fact that the joint distribution of $(Z_1,Y_1)$ in equilibrium is singular with respect to the two-dimensional Wiener measure, leading to infinite entropy. To this end, we introduce auxiliary equilibrium problems with penalties and then let the penalty go to zero, which corresponds to considering problems $\mathcal{S}_E(\mu_0,\mu^\eps_1)$ such that $\mu^\eps_1\rightharpoonup\mu_1$ for $\eps\to 0$ (see Theorem~\ref{thm:conv}). In particular, we show that  equilibria in models with trading costs converge to equilibria of the classical Kyle model (Corollary~\ref{cor.convinsprofit}).

The proof of Theorem~\ref{thm.cunc} establishing the equivalence of constrained and unconstrained problems relies on a novel progressive enlargement formula.  This is formulated in Theorem~\ref{theorem:H}, where we show the canonical decomposition of a diffusion process when its filtration is progressively enlarged by another process under the assumption that the processes are jointly Markov.
\\

\paragraph*{\bf Organization of the paper} The rest of the paper is organized as follows. In Section~\ref{sect.sch} we recall the \sch problem and penalized optimal transport problem, pointing out the connection to the Kyle model with quadratic transaction cost. Section~\ref{sect.reg} is devoted to regularity results on killed time-inhomogeneous diffusion processes. In Section~\ref{sect:c_and_unc} we introduce the type of constrained and unconstrained bridges studied in this paper. We show existence and uniqueness of solution to the unconstrained ones, as well as a characterization of the solution to the constrained ones. Moreover, we prove that solutions to the latter give equilibria in the Kyle model with transaction costs. In Section~\ref{sect.equiv} we show equivalence between constrained and unconstrained bridges. In Section~\ref{sect.lim} we 
relate the equilibrium problem in the classical Kyle model
(without penalties) to some \sch bridge problem, studying the limit of the case with transaction costs when the costs tend to zero. Section~\ref{sect:enlarg}
 is devoted to progressive enlargement of filtrations with the filtration generated by a process.

\section{\sch problems and first connections with equilibria}\label{sect.sch}
First formalized by \sch in \cite{sch31,sch32} in the context of quantum physics, the problem that takes his name was later reformulated by F\"ollmer in modern probabilistic terms in \cite{F88}. Consider the path-space $\cC=C([0,1];\mathbb R^d)$ of continuous $\mathbb R^d$-valued
paths on the unit time interval $[0,1]$, and a probability measure $R\in\cP(\cC)$ seen as the law of some reference process.
For two given measures $\mu_0,\mu_1\in\cP(\mathbb R^d)$, the problem consists in finding a probability measure $P\in\cP(\cC)$ that minimizes the relative entropy with respect to the reference measure $R$, while matching the marginal distributions $\mu_0$ and $\mu_1$ at time $0$ and $1$, respectively. Recall that the relative entropy of a probability $p$ with respect to another probability $r$ is given by
\[
H(p|r)= 
\begin{cases}
\int \ln{\left(\frac{dp}{dr}\right)dp}, & \text{if $p\ll r$}, \\
+\infty, & \text{else},
\end{cases}
\]
and that, for any measurable function $g$, we have the decomposition
\begin{equation}\label{eq.add}
H(p|r)=H(g_\#p|g_\#r)+\int H(p(\cdot|g=z)|r(\cdot|g=z))g_\#p(dz),
\end{equation}
where the probability $g_\#p$ is the push-forward of $p$ through $g$, that is, $g_\#p(A)=p(g^{-1}(A))$ for any Borel set $A$.
Then the (dynamic) \sch bridge problem can be formulated as
\be \label{sch.pb}
\mathcal{S}(\mu_0,\mu_1):=\inf\left\{H(P|R): P\in\cP(\cC)\ \text{ s.t. } P_0=\mu_0, P_1=\mu_1
\right\},
\ee
where, for a process $X\sim P$, $P_t\in\cP(\mathbb R^d)$ is the marginal law of $X$ at time $t$, i.e., $P_t={X_t}_\#P$.

Now, take $g$ to be the projection on initial and final state of a path in $\cC$, so that, for $X\sim P$, $g_\#P=(X_0,X_1)_\#P=:P_{01}\in\cP(\mathbb R^d\times \mathbb R^d)$ is the joint law of the initial and final locations under $P$. Then the disintegration of $P$ w.r.t. $P_{01}$ is given by
\[
P(d\omega)=\int_{\mathbb R^d\times \mathbb R^d}P^{xy}(d\omega)P_{01}(dx,dy),
\]
where $P^{xy}(\cdot)=P(\cdot|X_0=x,X_1=y)$ is the bridge of $P$ between $x$ and $y$, and the decomposition in \eqref{eq.add} gives
\[
H(P|R)=H(P_{01}|R_{01})+\int_{\mathbb R^d\times \mathbb R^d}H(P^{xy}|R^{xy})P_{01}(dx,dy).
\]
One can then show that the  \sch problem \eqref{sch.pb} admits a unique solution $\hat P$ given by the following mixture of bridges $R^{xy}$ of the reference measure $R$:
\[
\hat P(d\omega)=R^{xy}(d\omega)\hat{\pi}(dx,dy),
\]
where $\hat\pi$ is the unique solution to the (static) \sch bridge problem
\begin{equation}\label{sch.stat}
s(\mu_0,\mu_1):=\inf\left\{H(\pi|R_{01}): \pi\in\Pi(\mu_0,\mu_1)\right\},
\end{equation}
with $\Pi(\mu_0,\mu_1)=\{
\pi\in\cP(\mathbb R^d\times\mathbb R^d)\ \text{ s.t. } \pi_0=\mu_0, \pi_1=\mu_1\}$;
see \cite{F88} and \cite{leonard2012}. 

Now, recall that, given two distributions $\mu,\nu\in\cP(\mathbb R^d)$, a measurable function $c$ on $\mathbb R^d\times\mathbb R^d$, and a regularization parameter $\eps\geq 0$, the entropically regularized optimal transport problem can be formulated as
\begin{equation}\label{eot}
\text{EOT}_\eps(\mu,\nu,c):=\inf_{\pi\in\Pi(\mu,\nu)}\int c(x,y)\pi(dx,dy)+\eps H(\pi|\mu\otimes\nu),
\end{equation}
where the case of $\eps=0$ corresponds to the classical transport problem ($\text{EOT}_\eps\equiv\text{OT}$).
When $\eps>0$, the regularization term makes the problem strictly convex, so that \eqref{eot} admits a unique solution, that we denote by $\pi^\eps(\mu,\nu,c)$.
Note that problem \eqref{eot} can be recast as static \sch problem $s(\mu,\nu)$ with $R_{01}$ in \eqref{sch.stat} being replaced by $e^{-c/\eps}d(\mu\otimes\nu)$ appropriately normalized. Or viceversa, problem \eqref{sch.stat} with $R_{01}=\mu_0\otimes\mu_1$ corresponds to the regularized transport problem $\text{EOT}_1(\mu_0,\mu_1,0)$.

Notably, the optimizer $\pi^\eps=\pi^\eps(\mu,\nu,c)$ can be described in terms of the so-called Schr\"odinger potentials, which are functions 
$\varphi^\eps, \psi^\eps$ on $\mathbb R^d$ such that
\begin{equation}\label{eq:denspieps}
\frac{d\pi^\eps}{d\mu\otimes\nu}(x,y)=e^{\frac{\varphi^\eps(x)+\psi^\eps(y)-c(x,y)}{\eps}}\quad \mu\otimes\nu\text{-a.s.}
\end{equation}
and found as (unique up to translation) solution to the following system of equations:
\begin{align}
\begin{split}\label{sch.pot}
\varphi^\eps(x) =& - \eps\log  \int \exp\Big(\frac{\psi^\eps(y) - c(x,y)}{\eps}\Big) \, \nu(dy) \quad \mu\text{-a.s.}\\
\psi^\eps(y) =& -\eps \log  \int \exp\Big(\frac{\varphi^\eps(x) - c(x,y)}{\eps}\Big) \, \mu(dx) \quad \nu\text{-a.s.}
\end{split}
\end{align} 

In order to ease the connection with equilibrium problems, consider, for $d=1$, the entropic transport problem
\eqref{eot} with {\em quadratic} cost $c_2(x,y):=\frac12(F(x)-y)^2$. Then, for every $\pi\in\Pi(\mu,\nu)$, we have 
\[
\int c_2(x,y)\pi(dx,dy)=\frac12 \int F^2(x)\mu(dx)+\frac12 \int y^2\nu(dy)-\int F(x)y \pi(dx,dy),
\]
so that the first two terms on the right hand side do not depend on the specific coupling $\pi$, and thus this corresponds to considering the cost $c(x,y)=-F(x)y$.

This shows how the pair  $(\phi^\eps_F,\zeta^\eps)$ from \eqref{e:Sch1_alt}, i.e. the equations characterising the Kyle equilibrium with quadratic transaction cost, are related to the
Schr\"odinger potentials $(\varphi^\eps, \psi^\eps)$ in \eqref{sch.pot} associated to the regularized transport problem $\text{EOT}_\eps(\eta,\eta,c_2)$, where $\mu=\eta$ (distribution of $Y_1$)
and $\nu=\eta$ (distribution of $Z_1$):
\begin{equation}\label{eq:relphipsi}
 \phi_F^\eps(x)=\varphi^\eps(x)-\frac{F(x)^2}{2},\qquad \zeta^\eps(y)=\psi^\eps(y)-\frac{y^2}{2}.  
\end{equation}
Note that the corresponding optimizer $\pi^\eps(\eta,\eta,c_2)$ coincides with the joint distribution $\mu_1^\eps$ of $Z_1$ and $Y_1$ given in \eqref{eq.mu1eps}; see also the discussion at the end of Section~\ref{sect:c_and_unc}.

\section{Regularity results on Killed SDEs and setup}\label{sect.reg}
We consider the (possibly killed) time-inhomogeneous diffusion process $Z$ as the weak solution of
\begin{equation}\label{eq:Zref}
dZ_t=\chf_{Z_t\in E}\big(b(t,Z_t)dt+\sigma(t,Z_t)dW_t\big),\; t\in [0,1], \quad Z_0=z_0 \in E, 
\end{equation}
where $E=(\ell, \infty)$, with $\ell \in [-\infty, \infty)$, and $W$ is a Brownian motion. The killing occurs as soon as  $Z$ exits the set $E$, after which it remains constant. The assumption below on the drift and diffusion coefficients implies that, when taking the state space $E=\bbR$, one obtains a diffusion that is never killed. On the other hand,  any boundary  $\ell  \in (-\infty, \infty)$ can be reached in finite time from any initial point. That is, killing occurs if and only if  $\ell$ is finite. To unify the notation, we set $E_{\ell}:=E\cup {\ell}$ if $\ell$ is finite, and $E_{\ell}=E$ otherwise.

\begin{assumption}\label{a:main} The functions $[0,1]\times E \ni (t,z) \mapsto b(t,z)$ and $[0,1]\times E \ni (t,z) \mapsto \sigma(t,z) \in [\underline{\sigma},\infty)$, for some $\underline{\sigma}>0$, are bounded, Lipschitz in $z$ (uniformly in $t$), and H\"older continuous in $t$ (uniformly in $z$). 
\end{assumption}
Under the above assumption, there exists a unique strong solution to  \eqref{eq:Zref} for any $z_0\in E$. Moreover, $Z$ is a strong Markov process by \cite[Remark 2.2.4]{DMB-CD}. For $s\in[0,1]$ and $z\in E$, we denote by $\bbP^{s,z}$ the law of $(Z_t)_{t\in[s,1]}$ conditioned on $\{Z_s=z\}$.  
The following proposition collects some results regarding the transition function of $Z$.

\begin{proposition}\label{prop.reg} Under Assumption \ref{a:main}, the following statements hold:
\begin{enumerate}
\item \label{qexists} For any $t\in (0,1], y\in E $, there exists $q(\cdot, t; \cdot, y) \in C^{1,2}([0,t)\times E)$ such that
\begin{align} \label{e:defL}
\begin{split}
\bbP^{s,z}(Z_t\in dy)&=q(s,t;z,y)dy,  \quad s<t,\, z\in E\quad \text{and}\\
L(s,t;z)&:=\bbP^{s,z}(Z_t=\ell)=1-\int_E q(s,t;z,y)dy, \quad s<t,\,\, z\in E. 
\end{split}
\end{align}
Moreover, if $\ell$ is finite, for all $t>s$ and $y\in E$ we have
\[
\lim_{z\rar \ell}q(s,t;z,y)=0.
\]
\item \label{qintUB} The transition density $q$ and its derivatives satisfy the following bounds:
\[
|D_s^\alpha D_z^\beta q(s,t;z,y)|\leq c(t-s)^{-\frac{1+2\alpha +\beta}{2}}\exp\left(-C\frac{(y-z)^2}{t-s}\right),
\]
for $2\alpha+\beta \leq 2$, where  $c$ and $C$ are constants independent of $s,t,z$ and $y$, and $D_s^{\alpha}$ (resp. $D_z^{\alpha}$) denotes the $\alpha$-th partial derivative with respect to $s$ (resp. $z$).

\item \label{ball} For any $r>0$, $y\in E$, and $t>0$, 
\begin{align}
&\sup_{\substack{z\in B_r^c(y)\\s<t}}q(s,t;z,y)<\infty,\quad \text{and}\\
&\lim_{s\rar 0}\int_{B_r^c(z)}q(0,t-s;x,y)q(t-s,t;y,z)dy=0, \quad \forall x\in E, \label{e:tdduality0}
\end{align}
where $B_r(y):=\{z\in E:|y-z|<r\}$.
\end{enumerate}
\end{proposition}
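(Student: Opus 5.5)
The idea is to reduce everything to classical parabolic PDE theory for the Kolmogorov backward operator $\partial_s+\mathcal{L}_s$, where $\mathcal{L}_s f(z)=b(s,z)f'(z)+\tfrac12\sigma^2(s,z)f''(z)$. Assumption~\ref{a:main} gives bounded, uniformly elliptic coefficients that are Lipschitz in $z$ and H\"older in $t$. Under exactly these hypotheses the parametrix method (Friedman, \emph{Partial Differential Equations of Parabolic Type}, Ch.~1 and Ch.~3 for the Cauchy--Dirichlet problem on a half-line) yields a fundamental solution, respectively a Dirichlet Green function, denoted $q(s,t;z,y)$.

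\textbf{Case $\ell=-\infty$.} Here $q$ is the fundamental solution on $\bbR$. It is $C^{1,2}$ in $(s,z)$ on $[0,t)\times\bbR$ and satisfies the Gaussian bounds of item~\eqref{qintUB} directly. It remains to identify $q$ with the law of $Z$. Fix a bounded continuous $f$ and set $u(s,z)=\int q(s,t;z,y)f(y)\,dy$. Apply It\^o's formula to $u(r,Z_r)$ on $[s,t-\delta]$ and let $\delta\to0$. This shows $\bbE^{s,z}f(Z_t)=u(s,z)$, so $q$ is the transition density of $Z$. In this case $L\equiv0$, consistent with the definition in \eqref{e:defL}.

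\textbf{Case $\ell$ finite.} Take $q$ to be the Green function of $\partial_s+\mathcal{L}_s$ on $E=(\ell,\infty)$ with zero Dirichlet data at $\ell$. The same It\^o argument, now stopped at $\tau=\inf\{r\ge s: Z_r=\ell\}$, gives
\[
\bbE^{s,z}[f(Z_t)\chf_{\tau>t}]=\int_E q(s,t;z,y)f(y)\,dy .
\]
On $\{\tau\le t\}$ the process is killed and stays at $\ell$. Hence $\bbP^{s,z}(Z_t\in dy)=q\,dy$ on $E$, and the atom at $\ell$ equals $1-\int_E q\,dy$. This is formula \eqref{e:defL}.

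The boundary limit $q(s,t;z,y)\to0$ as $z\to\ell$ is the Dirichlet condition. It can also be seen probabilistically: $q(s,t;z,y)\le C\,\bbP^{s,z}(\tau>(t-s)/2)$ times a bounded kernel, by Chapman--Kolmogorov. Regularity of the boundary point $\ell$ for a uniformly elliptic diffusion then gives $\bbP^{s,z}(\tau>\epsilon)\to0$.

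For the derivative bounds in item~\eqref{qintUB} there are two routes. The first is to write $q=p-\tilde q$, where $p$ is the whole-line fundamental solution. The correction $\tilde q(s,t;z,y)=\bbE^{s,z}[\chf_{\tau\le t}\,p(\tau,t;\ell,y)]$ satisfies the same type of Gaussian estimate, by the strong Markov property and since $|y-\ell|+|z-\ell|\ge|y-z|$. The second is to cite Friedman's Green function estimates directly. Differentiating the correction in $z$ requires some care, so I would quote the parametrix estimates here rather than redo them. I expect \textbf{the uniformity of these derivative bounds up to the boundary} to be the main technical point.

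\textbf{Item \eqref{ball}.} Both statements follow from the Gaussian bound.

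For $|z-y|\ge r$ we have $q(s,t;z,y)\le c(t-s)^{-1/2}e^{-Cr^2/(t-s)}$. Since $x\mapsto x^{-1/2}e^{-Cr^2/x}$ is bounded on $(0,1]$, this gives the uniform bound over $z\in B_r^c(y)$ and $s<t$.

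For the integral, restrict to $y\in B_r^c(z)$. There
\[
q(t-s,t;y,z)\le c\,s^{-1/2}e^{-Cr^2/s}\to0 \quad\text{uniformly as } s\to0,
\]
and $\int q(0,t-s;x,y)\,dy\le1$. Hence the integral in \eqref{e:tdduality0} tends to $0$.
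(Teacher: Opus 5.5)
Your proposal is correct and takes essentially the paper's route. The paper also gets items (1)--(2) by citing classical parabolic theory: Ladyzhenskaya--Solonnikov--Ural'tseva, Sections IV.11 and IV.13 for $\ell=-\infty$, and Theorem IV.16.3 for the Dirichlet Green function when $\ell$ is finite, where you cite Friedman's parametrix and Green-function results. It then deduces item (3) from the Gaussian bounds by dominated convergence, where you give the equivalent direct uniform estimate; your It\^o-formula identification of the PDE kernel with the killed transition law is a step the paper leaves implicit.
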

\begin{proof}
Items \eqref{qexists} and \eqref{qintUB} follow from Sections~IV.11 and~IV.13 (when $\ell=-\infty$) and Theorem~IV.16.3 (when $\ell$ is finite) in Ladyzhenskaya et al.~\cite{ladyzhenskaia}.
Item \eqref{ball} is a direct consequence of these bounds by means of the dominated convergence theorem. 
\end{proof}

\begin{remark}
    In the sequel, we only use the properties of the transition density $q$ that are established in Proposition \ref{prop.reg}. Thus, all of the following continue to hold as long as the coefficients of the SDE \eqref{eq:Zref} are regular enough to ensure the validity of that proposition. 
\end{remark}

We shall denote by $\eta$ the distribution of $Z_1$ under $\bbP^{0,z_0}$, that is, 
\[
\eta(dz)=\chf_{z\in E} q(0,1;z_0,z) dz+L(0,1;z_0)\delta_{\ell}(dz).
\]
In particular, $\eta$ is  a measure on the Borel subsets of $E$ (or  $E_{\ell}$ if $\ell$ is finite).
We also set $\partial E^2_\ell:=(E_\ell\times\{\ell\})\cup (\{\ell\}\times E_\ell)$, the boundary of $E^2_\ell$. A straightforward application of the dominated convergence theorem yields the following regularity result on the transition densities.

\begin{corollary}\label{cor.reg} Under the hypothesis of Assumption \ref{a:main}, the following results hold:
\begin{enumerate}
\item \label{qsmooth} For any  $g:E\to\bbR$ such that  $\log g$ has  subquadratic growth,  and for $t \leq 1$, the map 
\be \label{e:Qtf}
[0,t)\times E\ni (s,z) \mapsto Q_sg(z):=\int_E g(y)q(s,t;z,y)dy  
\ee
is in $C^{1,2}([0,t)\times E)$. In particular, by taking $g\equiv 1$, $L(\cdot,t,\cdot)\in C^{1,2}([0,t)\times E)$.

Similarly, for any  $g:E\times E\to\bbR$ such that  $\log g$ has  subquadratic growth, and for $t\leq 1$, the map
\be \label{e:Qtff}
[0,t)\times E^2\ni (s,x) \mapsto P_sg(x):=\int_{E^2} g(z',y')p(s,t;x,(z',y'))dy'dz'  
\ee
is in $C^{1,2}([0,t)\times E^2)$,
where, for all $0\leq s <t \leq 1$ and $x=(z,y)\in E^2 ,(z',y') \in E^2$, 
\be \label{e:defp}
p(s,t; (z,y), (z',y')):=q(s,t;z,z')q(s,t;y,y').
\ee
\item
If $\ell$ is an accessible boundary, i.e. $\ell\in(-\infty,+\infty)$, then, for all $s\in [0,t)$, we have 
\[
\lim_{z\rar \ell}Q_sg(z)=0,\quad \text{and}\quad \lim_{x\rar \partial E^2_\ell}P_sg(x)=0. 
\]
\end{enumerate}
\end{corollary}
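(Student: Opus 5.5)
The plan is to differentiate under the integral sign, using the Gaussian-type bounds of Proposition~\ref{prop.reg}\eqref{qintUB} to produce integrable dominating functions, and to obtain the boundary limits from Proposition~\ref{prop.reg}\eqref{qexists} by dominated convergence.

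\textbf{Regularity of $Q_sg$.} Fix $t\le 1$ and a compact set $K\subset[0,t)\times E$. Then $t-s\ge\delta>0$ on $K$, and $z$ ranges over a bounded set. For $2\alpha+\beta\le 2$, Proposition~\ref{prop.reg}\eqref{qintUB} gives, uniformly over $(s,z)\in K$,
\[
|g(y)D^\alpha_sD^\beta_z q(s,t;z,y)|\le c\,\delta^{-3/2}|g(y)|\exp\Big(-C\frac{(y-z)^2}{t-s}\Big)\le c'|g(y)|\exp\big(-C'(y^2-1)\big).
\]
Here I use that $t-s\le 1$ and that $z$ is bounded, so that $(y-z)^2\ge y^2/2-z^2$.

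The growth condition on $g$ I will take as $|g(y)|\le A\exp(\kappa(1+|y|^{\gamma}))$ with $\gamma<2$. Under this condition the right-hand side is integrable in $y$, uniformly over $K$. The standard theorem on differentiating parametric integrals then lets me differentiate under the integral once in $s$ and twice in $z$. It also gives continuity of the derivatives, since each integrand $D^\alpha_sD^\beta_zq$ is continuous in $(s,z)$ by Proposition~\ref{prop.reg}\eqref{qexists}. Hence $Q_sg\in C^{1,2}([0,t)\times E)$. Taking $g\equiv1$ shows that $\int_E q(s,t;z,y)\,dy$ is $C^{1,2}$, and therefore so is $L=1-\int_E q(s,t;z,y)\,dy$.

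\textbf{Regularity of $P_sg$.} Here $p$ is a product of two copies of $q$. A mixed derivative of order $(\alpha;\beta_1,\beta_2)$ is handled by the Leibniz rule together with the bounds on each factor. The result is a two-dimensional Gaussian bound with a polynomial factor in $(t-s)^{-1}$, which is bounded on $K$.

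One subtlety is the meaning of $C^{1,2}$ in $x=(z,y)$. It should include second derivatives in $z$ and in $y$, and the mixed derivative $\partial_z\partial_y$. The mixed derivative only involves $D_z^1$ of each factor, so it is covered by the available bounds. Subquadratic growth of $\log g$ on $E^2$ then gives integrability exactly as in the one-dimensional case.

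\textbf{Boundary limits.} When $\ell$ is finite, fix $s$ and let $z\to\ell$. By Proposition~\ref{prop.reg}\eqref{qexists}, $q(s,t;z,y)\to0$ pointwise for each $y$. The $\alpha=\beta=0$ bound gives the domination $|g(y)|\,c(t-s)^{-1/2}e^{-C(y-z)^2/(t-s)}$. Since $z$ stays in a bounded neighbourhood of $\ell$, this is dominated by an integrable function of $y$ independent of $z$. Dominated convergence then yields $Q_sg(z)\to0$.

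For $P_sg$, let $x=(z,y)\to\partial E^2_\ell$. The case of interest is when one coordinate, say $z$, tends to $\ell$ while $y$ stays bounded. If instead $y\to\infty$ with $z\to\ell$, the Gaussian factor already forces decay of the dominating function. So I restrict to a bounded neighbourhood and argue along sequences. The product $p=q(s,t;z,z')q(s,t;y,y')$ has one factor tending to $0$ pointwise and the other bounded by $c(t-s)^{-1/2}$. A uniform Gaussian dominating function is again available, and dominated convergence gives $P_sg(x)\to0$.

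\textbf{Main obstacle.} I expect the only delicate point to be making the domination uniform near the endpoint $s\to t$. This is avoided by working on compacts of $[0,t)$, where $t-s\ge\delta$. The other point to watch is near the boundary $\ell$, where I will use that the bounds in Proposition~\ref{prop.reg}\eqref{qintUB} hold on all of $E$ with constants independent of $z$.
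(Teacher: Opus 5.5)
Your argument is correct and is essentially the paper's: the paper simply invokes dominated convergence with the Gaussian bounds of Proposition~\ref{prop.reg}, which is what you spell out. Your specific reading of ``subquadratic growth'' is also harmless, since you only ever use $g(x)\le K_\alpha e^{\alpha|x|^2}$ for some small $\alpha>0$.

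The one inaccuracy is your aside that the case $z\to\ell$ with $y\to\infty$ is handled by Gaussian decay. For $g(z',y')=e^{y'}$ one has $P_sg(z,y)=Q_s1(z)\,Q_s[e^{\cdot}](y)$, whose second factor blows up as $y\to\infty$. So $P_sg(z_n,y_n)$ need not tend to $0$ when $z_n\to\ell$ slowly and $y_n\to\infty$. That case is not needed, however: the limit is taken towards points of $\partial E^2_\ell$, whose coordinates are finite, and your bounded-neighbourhood argument covers it fully.
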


\section{Constrained and unconstrained \sch bridges}\label{sect:c_and_unc}
From this section on, we will  study \sch bridge problems for (possibly) killed time-inhomogeneous diffusion process and relate them to equilibrium problems. For this, we will adopt the following canonical setting. 
Let $\Om_E=C([0,1]; E_\ell^2)$  be the space of continuous  $E_\ell^2$-valued maps on $[0,1]$  absorbed at $\ell$. That is, $\om=(\om^1,\om^2)\in \Om_E$ only if $\om^i_t=\ell$ for any $t>\zeta^i:=\inf\{t\geq 0: \om^i_t=\ell\}$. We also denote by  $X=(X^1,X^2)$ the coordinate mapping and let  $\bbF=(\cF_t)_{t\in[0,1]}$ correspond to the canonical filtration on $\Om_E$. The coordinates $X^1, X^2$ will play the role of the private signal $Z$ and of the demand process $Y$, respectively, in an equilibrium problem  as in Section \ref{s:kyle}.
We will consider as a reference measure $R$ the joint law of two independent solutions of \eqref{eq:Zref}, that is, $R=law(Z)\otimes law(Z)$.  This in particular entails $R_1= \eta\otimes \eta$, with $\eta$ being the law of $Z_1$ under $\bbP^{0,z_0}$. 

For the \sch problem \eqref{sch.pb} 
to be well-defined, the target measure $\mu_1$ needs to be absolutely continuous w.r.t. $R_1$. More precisely, to avoid the trivial case we will consider
\begin{equation}\label{eq.finen}
\mu_1\in\mathcal{P}(\bbR\times\bbR)\; \text{ s.t. }\; H(\mu_1|R_1)<\infty.
\end{equation}  
We introduce the Radon-Nikodym density of $\mu_1$ with respect to $R_1$, and of its marginals with respect to $\eta$, as 
\begin{equation}\label{eq.dens}
f=\frac{d\mu_1}{dR_1},\quad f_1(z)= \int_{E_{\ell}}f(z,y)\eta(dy), \quad  f_2(y)= \int_{E_{\ell}}f(z,y)\eta(dz).
\end{equation}
For most of the results of this paper, we will rely on the following technical assumption.
\begin{assumption}\label{ass:f}
The density function $f$ is continuous and such that $\log f$ has subquadratic growth.
\end{assumption}

Beside the classical (unconstrained) \sch problem, we will also consider a modification that we call {\em constrained} \sch problem, motivated by equilibrium considerations in the Kyle model. Indeed, in equilibrium the insider conditions the demand process to end up at a certain level when the trading stops.  
Since the private signal $Z$ of the insider should be viewed as her estimate of the fundamental value of the traded asset and is, therefore, given and cannot be altered, the aforementioned conditioning can only be achieved by adding a suitable drift to the demand process $Y$. 
This will lead to a modification of the \sch problem, where the optimization is restricted to laws of such pairs of processes $(Z,Y)$  (see Section~\ref{sect:constr} below).

\subsection{Unconstrained \sch bridges}\label{sect:unconstr}
In this section, we fix marginals $\mu_0=\delta_{(z_0,z_0)}$ and $\mu_1$ satisfying \eqref{eq.finen}, and reference measure $R=law(Z)\otimes law(Z)$, and
study a variant of the classical \sch bridge problem \eqref{sch.pb} by restricting to probabilities having support in $\Omega_E$; that is, 
\be \label{P:unconstrainedeps}
\mathcal{S}_E(\mu_0,\mu_1):=\inf\left\{H(P|R): P\in\mathcal{P}(\Omega_E)\ \text{ s.t. }\, P_0=\mu_0, P_1=\mu_1\right\}.
\ee
In order to describe its solutions, we set,  for $t\in[0,1]$ and $(z,y)\in E^2_{\ell}$,
\be\label{e:hepsdef}
h(t,(z,y)):=\bbE^R\bigg[f(X^1_1,X^2_1) \Big|(X^1_t,X^2_t)=(z,y)\bigg],
\ee
where $f$ is the density in \eqref{eq.dens}.
We will use the following notation: 
\[
\mathbf{b}(t, (z,y))=\begin{bmatrix} b(t,z)\\
b(t,y)
\end{bmatrix}
,\quad  {\Sigma}(t, (z,y))=\begin{bmatrix} \sigma(t,z) & 0\\
0& \sigma (t,y) \end{bmatrix}, \quad \mathbf{I}(t, (z,y))=\begin{bmatrix} \chf_E(z)\\
\chf_E(y)
\end{bmatrix}.
\]
The next theorem extends   \cite[Theorem 3.1]{Dai91} and \cite[Sect. II.1.2-1.3]{F88}.
\begin{theorem}  \label{thm:first} 
Under Assumptions~\ref{a:main} and \ref{ass:f}, the SDE 
\begin{equation}
\label{e:epsbridge_g}
\begin{split}
X_t &=x_0+\int_0^t \mathbf{I}(u, X_u)\cdot
\left\{\mathbf{b}(u,X_u) + \Sigma^2(u,X_u)\frac{(\nabla h(u,X_u))^*}{h(u,X_u)}\right\} du\\
&\hspace{1cm}+\int_0^t \mathbf{I}(u, X_u)\cdot\Sigma(u,X_u)d\beta_u
\end{split}
	\end{equation}
 admits a unique weak solution,	where $A^*$ stands for the transpose of matrix $A$, $x_0=(z_0,z_0)$, and $\beta$ is a 2-dimensional standard Brownian motion. The corresponding   law of the solution coincides with the unique minimiser $P^*$ of the \sch problem $\mathcal{S}_E(\mu_0,\mu_1)$ in \eqref{P:unconstrainedeps}. In particular, for every $t\in [0,1]$, $\frac{dP^*}{dR}|_{\cF_t}=h(t,X_t)$.
\end{theorem}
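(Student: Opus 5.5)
The plan is to identify the minimiser explicitly as $P^*(d\omega):=f(X_1(\omega))R(d\omega)$ and then read off its dynamics by an $h$-transform (Girsanov) argument.

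\textbf{Step 1: $P^*$ is the minimiser.} Since $\mu_0=\delta_{x_0}=R_0$, the additivity formula \eqref{eq.add} applied with $g=X_1$ gives, for any admissible $P$,
\[
H(P|R)=H(\mu_1|R_1)+\int H\big(P(\cdot|X_1=x)\,\big|\,R(\cdot|X_1=x)\big)\mu_1(dx)\geq H(\mu_1|R_1).
\]
Equality holds if and only if $P(\cdot|X_1=x)=R(\cdot|X_1=x)$ for $\mu_1$-a.e. $x$, that is, if and only if $P=f(X_1)\cdot R$. This $P^*$ is supported on $\Omega_E$ because $R$ is, and its time-$1$ marginal is $\mu_1$. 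Hence $P^*$ is the unique minimiser and $\mathcal S_E(\mu_0,\mu_1)=H(\mu_1|R_1)<\infty$ by \eqref{eq.finen}.

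\textbf{Step 2: the density process.} By the Markov property of $R$ (two independent copies of the strong Markov process $Z$),
\[
\frac{dP^*}{dR}\Big|_{\cF_t}=\bbE^R[f(X_1)|\cF_t]=h(t,X_t).
\]
Write $h(t,x)=P_tf(x)$ on $E^2$, with $p$ as in \eqref{e:defp}. When one coordinate is absorbed at $\ell$, $h$ reduces to a one-dimensional integral $Q_t f(\ell,\cdot)$ or $Q_t f(\cdot,\ell)$, or to a constant.

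By Assumption~\ref{ass:f} and Corollary~\ref{cor.reg}, $h\in C^{1,2}([0,1)\times E^2)$ and it solves the backward Kolmogorov equation $\partial_t h+\mathcal L_t h=0$, where $\mathcal L_t$ is the generator of the pair. Applying It\^o's formula to $h(t,X_t)$ under $R$, on $t<1$, yields
\[
dh(t,X_t)=\nabla h(t,X_t)\,\mathbf I\,\Sigma\,d\beta^R_t .
\]
Care is needed at the killing times, where $X^i$ jumps into the absorbing state. There I would apply It\^o piecewise, on the stochastic intervals between the exit times, using the boundary behaviour from Corollary~\ref{cor.reg}(2). Since $h$ vanishes on $\partial E^2_\ell$ except through the $f(\ell,\cdot)$ terms, these contributions must be matched. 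Consistency follows because $h(t,X_t)$ is a genuine $R$-martingale, so the restarted function $h$ on each face fits continuously.

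\textbf{Step 3: Girsanov.} Localize with $\tau_n=\inf\{t:h(t,X_t)\le 1/n\}\wedge(1-1/n)$. Girsanov's theorem then shows that under $P^*$
\[
\beta_t-\int_0^t \mathbf I\,\Sigma\,\frac{(\nabla h)^*}{h}(u,X_u)\,du
\]
is a Brownian motion up to $\tau_n$. Moreover $P^*(\tau_n<1-1/n)\to0$, because $h(t,X_t)>0$ $P^*$-a.s. (a positive martingale under its own measure never hits $0$). Letting $n\to\infty$ and $t\uparrow1$ shows that $P^*$ is a weak solution of \eqref{e:epsbridge_g}.

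\textbf{Step 4: uniqueness in law.} Let $Q$ be any weak solution. On $[0,1-1/n]$ up to $\tau_n$, the drift is bounded (using the gradient bounds in Proposition~\ref{prop.reg}(\ref{qintUB})). Reversing the Girsanov change with density $1/h$ then gives $Q=P^*$ on $\cF_{\tau_n}$, because the reference SDE has pathwise uniqueness under Assumption~\ref{a:main}. Passing $n\to\infty$, with $Q(\tau_n<1-1/n)\to0$, yields $Q=P^*$ on $\cF_{1-}=\cF_1$.

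\textbf{Main obstacle.} I expect the delicate points to be the behaviour at the absorption boundary and at $t\to1$. Specifically, one has to show that $h>0$ along $P^*$-paths and that $\tau_n\to1$ under an arbitrary weak solution $Q$, not only under $P^*$. For the latter I would first try showing that $1/h(t\wedge\tau_n,X_{t\wedge\tau_n})$ is a $Q$-supermartingale bounded in expectation by $1$, which controls $Q(\inf h\le 1/n)$.
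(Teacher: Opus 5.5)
Your argument is correct, but it runs in the opposite direction from the paper's. The paper starts from the SDE. It invokes Theorem 2.1 of \cite{CDbridge} to obtain a unique weak solution $\hat P$ of \eqref{e:epsbridge_g}, which is strong Markov with $X_1\sim\mu_1$ and transition density $p^h$. It gets $d\hat P/dR|_{\cF_t}=h(t,X_t)$ from Girsanov and weak uniqueness. It then identifies $\hat P$ with the minimiser by showing that its bridges are those of $R$. That last step means checking the hypotheses of Theorem 2.2 of \cite{CDbridge}: weak continuity of $(s,x)\mapsto\hat P^{s,x}$, via the truncation $f\wedge N$ and Gaussian tails, and the time-duality limit \eqref{e:tddualityeps}. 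It then uses $\nabla p^h/p^h=\nabla p/p-\nabla h/h$ to see that the bridge SDE of $\hat P$ is that of $R$.

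You start from the minimiser instead. The additivity formula gives $P^*=f(X_1)\cdot R$ at once, so the ``same bridges as $R$'' property is automatic. The SDE then follows by Girsanov, and uniqueness by reversing the change of measure. This bypasses the weak-continuity and time-duality verifications entirely and is more self-contained. In exchange, the paper offloads to \cite{CDbridge} the technicalities you must handle by hand: It\^o across killing times, and the singular drift near faces where $h$ vanishes and as $t\to1$. It also records by-products such as the strong Markov property and the kernel $p^h$.

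Three of your steps need tightening. Your piecewise It\^o step in Step 2 is best justified by continuity of $h$ up to $\partial E^2_\ell$, rather than by the martingale property: as $z\to\ell$, $q(t,1;z,\cdot)\to0$ and $L(t,1;z)\to1$, so $h(t,(z,y))\to h(t,(\ell,y))$. For $t\uparrow1$ in Step 3, you should state that the drift is absolutely integrable on $[0,1]$. This follows from $\frac12\bbE^{P^*}\int_0^1|\mathbf{I}\cdot\Sigma(\nabla h)^*/h|^2(u,X_u)\,du=H(P^*|R)=H(\mu_1|R_1)<\infty$.

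In Step 4, the drift need not be bounded on $[0,\tau_n]$. The Gaussian bounds control the kernel uniformly, but after integrating against $f$ with $\log f$ merely subquadratic, $|\nabla h|/h$ can grow as $|x|\to\infty$. You do not need that bound, though. By It\^o, $1/h(\cdot\wedge\tau_n,X_{\cdot\wedge\tau_n})$ is a positive $Q$-local martingale bounded by $n$, hence a true martingale, and that is all the reverse change of measure uses.

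Your ``main obstacle'' also resolves itself. Once $Q=P^*$ on $\cF_{\tau_n}$, and since $\{\tau_n<1-1/n\}\in\cF_{\tau_n}$, you get $Q(\tau_n<1-1/n)=P^*(\tau_n<1-1/n)\to0$. No separate supermartingale estimate is needed, although yours would also work.
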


\begin{proof}
Note that Assumption \ref{a:main} in conjunction with Theorem A.1 in \cite{CDbridge} implies that Assumption 2.1 in \cite{CDbridge} is satisfied. Then, Theorem 2.1 in  \cite{CDbridge} shows the existence of a unique weak solution to \eqref{e:epsbridge_g}, that we denote by $\hat P$, since $h\in C^{1,2}([0,1)\times E^2)$. 
Note that the hypotheses in  \cite{CDbridge} include smoothness of the $h$-function at the boundary point as well. However, this is rather for convenience than necessity in the application of It\^o's formula. Indeed, by localising if necessary, one can show that the semimartingale dynamics of $h(\cdot,X)$ are in the required form used in the proofs  therein. 
Moreover, by the same theorem,  under $\hat P$, $X_1$ is distributed according to $\mu_1$
and  $X$ is a strong Markov process with transition density $p^h$ such that 
\[
\hat P(X_t\in dx'|X_s=x)=p^h(s,x;t,x')dx',  \quad s<t, \, x,x'\in E^2, 
\]
where $p^h(s,x;t,x')=\frac{h(t,x')}{h(s,x)} p(s,t;x,x')$ and $p$ is as defined in \eqref{e:defp}. In particular, $p^h$ inherits the same smoothness as $p$.
Moreover, it follows from Girsanov's theorem and the weak uniqueness of the solutions of \eqref{e:epsbridge_g} that $\frac{d\hat P}{dR}|_{\cF_t}=h(t,X_t)$.

Now we claim that $\hat P=P^*$; that is, that the weak solution to (\ref{e:epsbridge_g}) has the same bridges as $R$. To this end, first observe that Assumption 2.2 in \cite{CDbridge} is satisfied. Indeed, note that \eqref{e:Qtf} implies that $Z$ is a Feller process, and in particular $(\bbP^{s,z})_{s\in [0,1], z\in E_\ell}$ is a weakly continuous family of measures  (see, e.g., Theorem 3.1 in Chapter 1 of \cite{Pinsky}). To show weak continuity of the family $(\hat P^{s,x})_{s\in [0,1], x\in E_\ell^2}$, where $\hat P^{s,x}$ denotes the law of the process $(X_t)_{t\in [s,1]}$ under $\hat P$ given $X_s=x$, it suffices to show continuity of $(s,x) \mapsto \bbE^{s,x}[G((X_t)_{t\in [s,1]})f(X_1)]$ for any bounded and continuous function $G$ acting on the set of $E_\ell^2$-valued continuous maps on $[s,1]$, where  $\bbE^{s,x}$ denotes the conditional expectation under $R$ given $X_s=x$.

To this end, let $N>0$ be arbitrary.  Then,
\begin{align*}
    \bbE^{s,x}[G((X_t)_{t\in [s,1]})f(X_1)]
    &=\bbE^{s,x}[G((X_t)_{t\in [s,1]})(f(X_1)\wedge N)]\\
    &+\bbE^{s,x}[G((X_t)_{t\in [s,1]})(f(X_1)-f(X_1)\wedge N)].
\end{align*}
Since $f\wedge N$ is continuous and bounded, it follows from the weak continuity of $(\bbP^{s,z})_{s\in [0,1], z\in E_\ell}$ that 
\[
\lim_{(p,y)\to (s,x)}\bbE^{p,y}[G((X_t)_{t\in [p,1]})(f(X_1)\wedge N)]=\bbE^{s,x}[G((X_t)_{t\in [s,1]})(f(X_1)\wedge N)].
\]
On the other hand, for large enough $N$, for any $\alpha>0$ there exists $K>0$ such that $f(x) \leq K \exp(\alpha \|x\|_2^2)$ on $\{x:f(x)>N\}$, where $\|x\|_p$ denotes the $L^p$ norm of $x$. Thus,
\begin{align*}
    \bbE^{s,x}[|G((X_t)_{t\in [s,1]})|&f(X_1)\chf_{[f(X_1)>N]}]\leq K C_G  \bbE^{s,x}[\exp(\alpha \|X_1\|_2^2)\chf_{[\|X_1\|_2^2> \frac{1}{\alpha}\log \frac{N}{K}]}]\\
    &\leq K C_G  \bbE^{s,x}[\exp(2 \alpha (\|X_1-X_s\|_2^2+\|X_s\|_2^2)\chf_{\{\|X_1-X_s\|_2^2> \frac{1}{2\alpha}\log \frac{N}{K}-\|X_s\|^2\}}],
\end{align*}
where $C_G$ is an upper bound for $|G|$. 
Thus, from Proposition~\ref{prop.reg}, and using the  Gaussian bound on the transition density, it follows that 
\[
\sup_{s\in [0,1]}\sup_{y \in B_r(x)}\bbE^{s,y}[|G((X_t)_{t\in [s,1]})|f(X_1)\chf_{[f(X_1)>N]}] \to 0
\]
as $N\to \infty$, for any $r>0$. An analogous argument also shows that 
\[
\sup_{s\in [0,1]}\sup_{y \in B_r(x)}\bbE^{s,y}[|G((X_t)_{t\in [s,1]})N\chf_{[f(X_1)>N]}] \to 0
\]
as $N\to \infty$, for any $r>0$, where $B_r(x)=\{y\in E^2: \|y-x\|\leq r\}$. This establishes the weak continuity of the family  $(\hat P^{s,x})_{s\in [0,1], x\in E_\ell^2}$.

Thus, in view of Assumption \ref{a:main} and the continuity of $h$, all that remains to be shown is that\footnote{Although the assumption therein is stated for time-homogeneous diffusions, translation of the conditions to our setting is trivial by considering the space-time process, i.e. adding time as another dimension to the state space, since there is no uniform ellipticity assumption in \cite{CDbridge}.}
\be \label{e:tddualityeps}
\lim_{s\rar 0}\int_{B_r^c(x')}p^h(0,t-s;x,y)p^{h}(t-s,t;y,x')dy=0, \quad \forall x\in E^2.
\ee
However, this follows from \eqref{e:tdduality0} since \[
p^h(0,t-s;x,y)p^h(t-s,t;y,x')=\frac{h(t,x')}{h(0,x)}p(0,t-s;x,y)p(t-s,t;y,x').
\]
Thus, if we condition on $\{X_1=x\}$, Theorem 2.2 in \cite{CDbridge} shows that the resulting bridge under $\hat P$ is the weak solution of 
\be \label{e:epsbridge2}\begin{split}
    dX_t&=\mathbf{I}(t, X_t)\cdot\Sigma(t,X_t)d\beta_t \\&+ \mathbf{I}(t, X_t)\cdot
\left\{\mathbf{b}(t,X_t)+ \Sigma^2(t,X_t)\left(\frac{(\nabla h(t,X_t))^*}{h(t,X_t)}+\frac{(\nabla p^{h}(t,X_t;1,x))^*}{p^{h}(t,X_t;1,x)}\right)\right\}dt.
\end{split}
\ee

Next observe from the definition of $p^h$ that
\[
\frac{(\nabla p^h(t,x;1,z))^*}{p^h(t,x;1,z)} =\frac{(\nabla p(t,1;x,z))^*}{p(t,1;x,z)}-\frac{(\nabla h(t,x))^*}{h(t,x)}.
\]
Thus, \eqref{e:epsbridge2} is in fact
\[
\begin{split}
    dX_t&=\mathbf{I}(t, X_t)\cdot\Sigma(t,X_t)d\beta_t \\&+ \mathbf{I}(t, X_t)\cdot
\left\{
\mathbf{b}(t,X_t)+ \Sigma^2(t,X_t)\frac{(\nabla p(t,X_t;1,x))^*}{p(t,X_t;1,x)}
\right\}dt,
\end{split}
\]
which is the SDE for $R$ conditioned on $\{R_1=x\}$ by Theorem 2.2 in \cite{CDbridge}. Note that Assumption 2.2 of \cite{CDbridge} is valid in this case due to Assumption \ref{a:main}, by analogous --and in fact simpler-- considerations.
\end{proof}

This leads to the following conditional distribution property for the solution to \eqref{P:unconstrainedeps}.
\begin{corollary}\label{c:conddist} Assume the setting of Theorem \ref{thm:first}. Then, for $i,j\in\{1,2\}, i\neq j$: 
\[
P^*(X^i_t \in dx_i|\cF^{X^j}_t)= \frac{h^{(i)}(t,x_i, X^j_t)R(X^i_t\in dx_i)}{\pi^i(t,X^j_t)}, \quad t\in[0,1],\; x_i\in E_{\ell},
\]
where  $\bbF^{X^j}=(\cF_t^{X^j})_{t\in[0,1]}$ is the minimal filtration generated by $X^j$ satisfying the usual hypothesis,
$h^{(i)}(t,u,v)$ equals $h(t,(u,v))$ (resp. $h(t,(v,u))$) if $i=1$ (resp. $i=2$), and 
\[
\pi^i(t,x_j)=\int_{E_{\ell}}h(t,x)R(X^i_t\in dx_i),
\]
with $x=(x_1,x_2)$.
\end{corollary}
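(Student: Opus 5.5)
The plan is to use the density $\frac{dP^*}{dR}|_{\cF_t}=h(t,X_t)$ from Theorem~\ref{thm:first} and compute the conditional law under $P^*$ by an abstract Bayes formula. Fix $t\in[0,1]$ and take $i=1$, $j=2$; the other case is symmetric. For bounded measurable $g$ on $E_\ell$ and bounded $\cF^{X^2}_t$-measurable $G$, I want to show
\[
\bbE^{P^*}\big[g(X^1_t)G\big]=\bbE^{P^*}\Big[G\,\frac{\int_{E_\ell} g(x_1)h(t,(x_1,X^2_t))R(X^1_t\in dx_1)}{\pi^1(t,X^2_t)}\Big].
\]
This identifies the regular conditional distribution. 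The passage to the usual augmentation of $\bbF^{X^2}$ is harmless, because $P^*\ll R$ and augmenting by null sets does not change conditional expectations.

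The first step is to move the left-hand side to $R$. Since $g(X^1_t)G$ is $\cF_t$-measurable, it equals $\bbE^R[g(X^1_t)G\,h(t,X_t)]$. The key structural fact is that under $R=law(Z)\otimes law(Z)$ the coordinates $X^1$ and $X^2$ are independent. Hence $X^1_t$ is independent of $\cF^{X^2}_t$ under $R$, and by Fubini
\[
\bbE^R\big[g(X^1_t)h(t,X_t)\,\big|\,\cF^{X^2}_t\big]=\int_{E_\ell} g(x_1)h(t,(x_1,X^2_t))\,R(X^1_t\in dx_1)=:\Phi_g(X^2_t).
\]
Taking $g\equiv1$ gives $\bbE^R[h(t,X_t)|\cF^{X^2}_t]=\pi^1(t,X^2_t)$. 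So $\pi^1(t,X^2_t)$ is the density of $P^*$ with respect to $R$ on $\cF^{X^2}_t$, and in particular it is $P^*$-a.s.\ strictly positive.

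Now I combine the two pieces. We have $\bbE^{P^*}[g(X^1_t)G]=\bbE^R[G\,\Phi_g(X^2_t)]$. Multiplying and dividing by $\pi^1(t,X^2_t)$ on the set where it is positive gives
\[
\bbE^R\Big[G\,\frac{\Phi_g(X^2_t)}{\pi^1(t,X^2_t)}\,\pi^1(t,X^2_t)\Big]=\bbE^{P^*}\Big[G\,\frac{\Phi_g(X^2_t)}{\pi^1(t,X^2_t)}\Big].
\]
On $\{\pi^1=0\}$ we have $\Phi_g=0$ as well, so that set contributes nothing. This yields the claimed formula for $P^*(X^1_t\in dx_1|\cF^{X^2}_t)$.

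The only genuinely delicate point is integrability and measurability. I need $h(t,\cdot)$ to be jointly measurable and $R$-integrable so that Fubini applies; this follows from $\bbE^R[h(t,X_t)]=1$ and $h\ge0$. At $t=0$ everything is deterministic, and at $t=1$ we have $h(1,\cdot)=f$, so both endpoints are covered by the same argument. The boundary point $\ell$ carries an atom of $R(X^1_t\in dx_1)$, which is why the statement integrates over $E_\ell$ rather than $E$. I do not expect a serious obstacle: everything reduces to independence under $R$ combined with Bayes' formula.
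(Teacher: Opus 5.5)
Your proposal is correct and follows the paper's proof essentially step for step. You pass to $R$ via $\frac{dP^*}{dR}|_{\cF_t}=h(t,X_t)$, use the independence of $X^1$ and $X^2$ under $R$ to compute $\bbE^R[g(X^1_t)h(t,X_t)|\cF^{X^2}_t]$, and then divide and multiply by $\pi^1(t,X^2_t)=\bbE^R[h(t,X_t)|\cF^{X^2}_t]$ to return to $P^*$. Your explicit handling of the set $\{\pi^1=0\}$ and of integrability is a small addition, since the paper leaves these points implicit.
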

\begin{proof} Without loss of generality, we show the statement for $i=1$.
Let $g$ be a bounded measurable function on $E_{\ell}$ and $A\in \cF^{X^2}_t$. Recall from Theorem \ref{thm:first} that 
$\frac{d P^*}{dR}|_{\cF_t}=h(t,X_t)$. Thus, denoting the expectation with respect to $ P^{*}$ by $\bbE^*$, we obtain
\[
\begin{split}
\bbE^*[g(X^1_t)\chf_A]&=\bbE^R[g(X^1_t)\chf_A h(t,(X^1_t,X^2_t))]\\
&=\bbE^R\left[\chf_A \int_{E_{\ell}}g(z) h(t,(z,X^2_t))R(X^1_t\in dz)\right]\\
&=\bbE^R\left[\chf_A \frac{\int_{E_{\ell}}g(z) h(t,(z,X^2_t))R(X^1_t\in dz)}{\bbE^R[h(t,X_t)|\cF^{X^2}_t]}h(t,X_t)\right]\\
&=\bbE^*\left[\chf_A \frac{\int_{E_{\ell}}g(z) h(t,(z,X^2_t))R(X^1_t\in dz)}{\bbE^R[h(t,X_t)|\cF^{X^2}_t]}\right],
\end{split}
\]
where the second equality follows from the independence of $X^1$ and $X^2$ under $R$. Moreover,
\[
\bbE^R[h(t,X_t)|\cF^{X^2}_t]=\int_{E_{\ell}}h(t,(z,X^2_t))R(X^1_t\in dz).
\] 
\end{proof}

The above immediately gives the dynamics of $X^i$, $i=1,2$, in their own filtration under $P^*$.
\begin{corollary}\label{c:natdyn}
Assume the setting of Theorem \ref{thm:first}. Then the following dynamics hold for $t\in [0,1]$:
\begin{align}
    dX^1_t&= \chf_{X^1_t\in E}\left(\sigma(t,X^1_t)dB_t^1 + b(t,X^1_t)dt+\sigma^2(t,X^1_t)\frac{\pi^2_y(t,X^1_t)}{\pi^2(t,X^1_t)}dt\right)\label{e:Schd1NF}\\
    dX^2_t&=\chf_{X^2_t\in E}\left(\sigma(t,X^2_t)dB_t^2 + b(t,X^2_t))dt+\sigma^2(t,X^2_t)\frac{\pi^1_y(t,X^2_t)}{\pi^1(t,X^2_t)}dt\right),\label{e:Schd2NF}
\end{align}
where, for $i=1,2$, $B^i$ is a $(P^*,\cF^{X^i})$-Brownian motion, $\pi^i$ is the function defined in Corollary~\ref{c:conddist}, and $\pi^i_y$ stands for the derivative of $\pi^i$ w.r.t. its second argument.
\end{corollary}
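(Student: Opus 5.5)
By symmetry we only treat $X^2$; the case of $X^1$ is identical with the roles of the coordinates swapped. The plan is to start from the semimartingale decomposition of $X^2$ in the full filtration $\bbF$ under $P^*$, given by Theorem~\ref{thm:first}, and project it onto the smaller filtration $\bbF^{X^2}$. The projection is done with the standard innovation (filtering) theorem, using the conditional law supplied by Corollary~\ref{c:conddist}.

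\textbf{Step 1: dynamics of $X^2$ in $\bbF$.} From \eqref{e:epsbridge_g},
\[
dX^2_t=\chf_{X^2_t\in E}\Big(\sigma(t,X^2_t)d\beta^2_t+b(t,X^2_t)dt+\sigma^2(t,X^2_t)\frac{h_y(t,(X^1_t,X^2_t))}{h(t,(X^1_t,X^2_t))}dt\Big),
\]
where $h_y$ denotes the derivative of $h$ in its second spatial variable.

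\textbf{Step 2: innovation process.} Set, up to the absorption time of $X^2$,
\[
dB^2_t=d\beta^2_t+\sigma(t,X^2_t)\Big(\frac{h_y}{h}(t,X_t)-\bbE^*\Big[\frac{h_y}{h}(t,X_t)\Big|\cF^{X^2}_t\Big]\Big)dt,
\]
and continue $B^2$ with an independent Brownian motion after absorption. Since $\sigma(t,X^2_t)$ is $\cF^{X^2}$-adapted and bounded away from zero, $B^2$ is an $\cF^{X^2}$-adapted continuous local martingale with quadratic variation $t$. By L\'evy's characterization it is therefore an $(P^*,\cF^{X^2})$-Brownian motion. This is the classical innovations argument.

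For this step we need integrability of the drift, namely $\bbE^*\int_0^t|h_y/h|(u,X_u)\,du<\infty$ for $t<1$. I would obtain this from the Gaussian derivative bounds in Proposition~\ref{prop.reg}(2) combined with Assumption~\ref{ass:f}. If needed, I would localize by stopping times $t<1$ and pass to the limit $t\uparrow1$ by continuity.

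\textbf{Step 3: computing the conditional drift.} By Corollary~\ref{c:conddist},
\[
\bbE^*\Big[\frac{h_y}{h}(t,X_t)\Big|\cF^{X^2}_t\Big]=\frac{\int h_y(t,(z,X^2_t))R(X^1_t\in dz)}{\pi^1(t,X^2_t)}.
\]
In this expression the factor $h$ cancels against the conditional density. The numerator equals $\pi^1_y(t,X^2_t)$, provided differentiation under the integral in $\pi^1(t,y)=\int h(t,(z,y))R(X^1_t\in dz)$ is justified. Two points need care here:
\begin{itemize}
\item on $\{X^1_t=\ell\}$ we have $h(t,(\ell,y))=\bbE[f(\ell,X^2_1)\mid X^2_t=y]$, which is differentiable by Corollary~\ref{cor.reg};
\item on $E$ we write $h_y(t,(z,y))=\int f\,\partial_y p$ and use the bound $|\partial_y q|\le c(1-t)^{-1}e^{-C(\cdot)^2/(1-t)}$ together with the subquadratic growth of $\log f$ to get a dominating function that is locally uniform in $y$.
\end{itemize}
This domination is the main technical obstacle, though it is of the same nature as the argument behind Corollary~\ref{cor.reg}. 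Substituting the result of Step 3 into Step 1 gives \eqref{e:Schd2NF}.
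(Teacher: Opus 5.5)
Your proposal is correct and follows essentially the same route as the paper: project the $\bbF$-drift of $X^2$ from \eqref{e:epsbridge_g} onto $\cF^{X^2}$ with the innovations/filtering theorem (the paper cites Liptser--Shiryaev for this step). Then evaluate the conditional expectation via Corollary~\ref{c:conddist}, differentiating under the integral sign using Proposition~\ref{prop.reg} and the growth of $\log f$, and the remark below fills the one detail you leave vague.

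For the integrability in Step 2, Gaussian bounds alone only control $|h_y|$, not $|h_y/h|$. The clean fix is that on $\{X^2_u\in E\}$ one has $\bbE^*[|h_y/h|(u,X_u)]\le\bbE^R[|h_y(u,X_u)|]$ because $dP^*/dR|_{\cF_u}=h(u,X_u)$, which removes any need for a lower bound on $h$.
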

\begin{proof}
We shall only prove the dynamics for $X^2$ as the other is proved similarly. 
It follows from the standard filtering theory (see \cite[Theorem 8.1]{lipcer2001statistics}) that, on the set $\{X^2_t \in E\}$,
\[
dX^2_t= \sigma(t,X^2_t)dB_t^2 + \left(b(t,X^2_t)+\sigma^2(t,X^2_t)\bbE^*\left[\frac{h_2(t,(X^1_t,X^2_t))}{h(t,(X^1_t,X^2_t))}\Big|\cF^{X^2}_t\right]\right)dt,
\]
where $\bbE^*$ is expectation with respect to $P^*$, and  $h_i$ is the partial derivative of $h$ with respect to  $x_i$. Now, in view of Corollary \ref{c:conddist}, the conditional expectation above is given by
\[
\int_{E_{\ell}} h_2(t,(z,X^2_t))\frac{R(X^1_t\in dz)}{\pi^1(t,X^2_t)}=\frac{\pi^1_y(t,X^2_t)}{\pi^1(t,X^2_t)}.
\]
Note that differentiation under the integral sign that paves the way to the last equality is justified  in view of Proposition \ref{prop.reg} and the growth assumption on $\log f$. This completes the proof.
\end{proof}
\begin{remark}
The above result is not surprising, as the change of measure density that determines the law of $X^i$ under $P^*$ is given by $\bbE^R[h(t,X_t)|\cF^{X^i}_t]=\pi^j(t,X^i_t)$. Since the quadratic variation of $X^i$ remains unchanged, 
\[
d\pi^j(t,X^i_t)=\pi^j_y(t,X^i_t)\sigma(t,X^i_t)d\tilde{B}^i_t,
\]
for some Brownian motion $\tilde{B}^i$, and one can conclude via  Girsanov's theorem.
\end{remark}

\subsection{Constrained \sch bridges and relation with equilibrium problems with penalties}\label{sect:constr}
The aim of this section is to recover the equilibrium in the Kyle model with penalties as solution to a specific constrained \sch problem. With this in mind, in the setting introduced at the beginning of Section~\ref{sect:c_and_unc}, we interpret the canonical process $X=(X^1,X^2)$ as the pair of private signal of the insider ($X^1$) and demand process ($X^2$). Since the insider knows the terminal value $X^1_1$ in addition to observing the demand process, we will consider the filtration $\bar{\bbG}=(\bar{\cG}_t)_{t\in[0,1]}$ generated by $(X_t)_{t\in [0,1]}$ and $X^1_1$ and satisfying the usual conditions.
As a result of this filtration enlargement, the dynamics of the insider's signal and the demand process take the following form:
\begin{align}
\begin{split}\label{e:SchKyle}
dX^1_t =&\chf_{X^1_t\in E}\left\{b(t,X^1_t)dt +\sigma(t,X^1_t)d\beta^1_t\right\}\\
&+\chf_{X^1_t\in E}\sigma^2(t,X^1_t)\bigg(\frac{q_z(t,1;X^1_t,X^1_{1})}{q(t,1;X^1_t,X^1_{1})}\chf_{X^1_{1}\in E}+ \frac{L_z(t,1;X^1_t)}{L(t,1;X^1_t)}\chf_{X^1_{1}=\ell}\bigg)dt,\quad  X^1_0=z_0,\\
dX^2_t =&\chf_{X^2_t\in E}\{b(t,X^2_t) + \alpha_t\}dt + \chf_{X^2_t\in E}\sigma(t,X^2_t)d\beta^2_t, \quad X^2_0=z_0,
\end{split}
\end{align}
where $\beta=(\beta^1,\beta^2)$ is a $\bar{\bbG}$-Brownian motion, $\alpha$ is $\bar{\bbG}$-adapted, $q$ and $L$ are the functions defined in \eqref{e:defL}, and $q_z$ (resp. $L_z$) is the derivative of $q(t,1;z,y)$ (resp. $L(t,1;;z)$) with respect to $z$. In the above, the dynamics of $X^1$ simply follows from the formula in Theorem~\ref{theorem:H}\footnote{To apply the formula therein, take $Z=X^1$ and $Y_t= X^1_1$ for all $t$.}. Note that the dynamics of $X^2$ remain unchanged under the enlarged filtration since the driving Brownian motion of $X^2$ under the natural filtration $(\cF_t)_{t\in [0,1]}$ is independent of $X^1_1$. 

As we are interested in studying the \sch problem constrained to solutions of this system, we restrict our attention to the case of $\alpha$ satisfying the integrability condition
\begin{equation}\label{eq:alphasqint}
\bbE\bigg[\half\int_0^{1\wedge \zeta(X^2)}\frac{\alpha^2_t}{\sigma^2(t,X^2_t)}dt
\bigg]<\infty,
\end{equation}
where $\zeta(X^2):=\inf\{t\geq 0: X^2_t\notin E\}$.
In particular, by Girsanov theorem, this guarantees the existence of a unique weak solution to \eqref{e:SchKyle}. We denote by $P(\alpha)$ the joint law of $(X^1,X^2)$, and observe that the value of the expectation in \eqref{eq:alphasqint}
corresponds to the value of the relative entropy $H(P(\alpha)|R)$.

Formally, for $\mu_0=\delta_{(z_0,z_0)}$ and $\mu_1$ as in \eqref{eq.finen}, we consider the constrained \sch problem
\be \label{e:minentadpt}
\bar{\mathcal{S}}(\mu_0,\mu_1):=\inf\{H(P|R):P\in \bar{\cP}(\mu_1)\},
\ee
where 
\[
\bar{\cP}(\mu_1)=\{P(\alpha) : \, \text{ $\alpha$ is $\bar{\bbG}$-adapted, satisfies \eqref{eq:alphasqint}, and}\  P_1(\alpha)=\mu_1\}.
\]
That is, \eqref{e:minentadpt} corresponds to the \sch problem  \eqref{P:unconstrainedeps} with the additional constraint that $P$ is in $\bar\cP(\mu_1)$. Note that, in order for $\bar\cP(\mu_1)$ to be non-empty, the first component of $\mu_1$ must be $\eta$, which will be our standing assumption for this section. As a result, the density $f_1$ in \eqref{eq.dens} equals $1$.

\begin{theorem}\label{thm_cons}
Under Assumptions~\ref{a:main} and \ref{ass:f},  the optimizer of the constrained \sch problem \eqref{e:minentadpt} is $P({\hat\alpha})$, with $\hat\alpha_t=\sigma^2(t,X^2_t)\frac{\rho_y(t,X^2_t;X^1_1)}{\rho(t,X^2_t;X^1_1)}$, where $\rho:[0,1]\times E_\ell^2\to\bbR_+$ is defined as
\begin{equation}\label{eq:rho}
\rho(t,y;z):=\int_{E}f(z,u)q(t,1;y,u)du + f(z,\ell)L(t,1;y),
\end{equation}
with $\rho_y$ being its partial derivative with respect to $y$.
\end{theorem}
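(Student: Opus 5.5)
The plan is a conditioning argument on $X^1_1$: reduce the constrained problem to a family, indexed by $z=X^1_1$, of one-dimensional \sch problems for $X^2$ alone, each solved by an $h$-transform.

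\emph{Step 1 (decomposition of the entropy).} For any $P(\alpha)\in\bar\cP(\mu_1)$, apply the additivity formula \eqref{eq.add} with $g(\omega)=\omega^1_1$. Since the first marginal of $\mu_1$ is $\eta=R_{X^1_1}$, the term $H(g_\#P|g_\#R)$ vanishes. Under $\bar\bbG$ the dynamics of $X^1$ in \eqref{e:SchKyle} are exactly those of $X^1$ under $R$ conditioned on $X^1_1$ (Theorem~\ref{theorem:H}), so conditionally on $X^1_1=z$ the law of $X^1$ is the bridge $R^{1,z}$ under both $P(\alpha)$ and $R$. Moreover $X^2$ is independent of $X^1$ under $R$. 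Hence
\[
H(P(\alpha)|R)=\int H\big(P(\alpha)(\cdot|X^1_1=z)\,\big|\,R^{1,z}\otimes \mathrm{law}(Z)\big)\,\eta(dz),
\]
which is also the Girsanov value \eqref{eq:alphasqint}.

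\emph{Step 2 (lower bound).} Fix $z$. The marginal constraint $P_1(\alpha)=\mu_1$ forces the conditional law of $X^2_1$ given $X^1_1=z$ to be $f(z,y)\eta(dy)$. This uses $f_1\equiv 1$, so that $f(z,\cdot)$ is a probability density with respect to $\eta$. Applying \eqref{eq.add} once more, now with respect to $(X^1,X^2_1)$, and using the data-processing inequality, each conditional entropy is bounded below by
\[
H\big(f(z,\cdot)\eta\,\big|\,\eta\big)=\int f(z,y)\log f(z,y)\,\eta(dy).
\]
Integrating over $\eta(dz)$ gives $H(P(\alpha)|R)\ge H(\mu_1|R_1)$.

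\emph{Step 3 (attainment).} Under $\hat\alpha$, conditionally on $X^1_1=z$, the process $X^2$ solves the $h$-transform SDE of $\mathrm{law}(Z)$ with $h(t,y)=\rho(t,y;z)$. This $h$ is space--time harmonic for $Z$ with terminal value $f(z,\cdot)$ on $E_\ell$: the killed part contributes $f(z,\ell)L$. Exactly as in the one-dimensional version of Theorem~\ref{thm:first}, based on \cite{CDbridge} and the regularity in Proposition~\ref{prop.reg} and Corollary~\ref{cor.reg}, this yields the following.
\begin{itemize}
\item The weak solution is unique.
\item Its density with respect to $\mathrm{law}(Z)$ is $\rho(t,X^2_t;z)$.
\item $X^2_1\sim f(z,\cdot)\eta$.
\item The bridges of $X^2$ coincide with those of $Z$.
\end{itemize}
So $P(\hat\alpha)_1=\mu_1$, and the inequality in Step 2 is an equality. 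Integrability \eqref{eq:alphasqint} follows because the entropy equals $H(\mu_1|R_1)<\infty$; to get there rigorously, localize at the exit times of compacts and pass to the limit. Uniqueness of the optimizer follows from strict convexity of $H(\cdot|R)$ on the convex set of laws with these marginals, or more directly from the fact that equality in Step 2 forces the conditional law of $X^2$ to be the mixture of $Z$-bridges with terminal law $f(z,\cdot)\eta$, which is exactly $P(\hat\alpha)$.

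\emph{Main obstacle.} The delicate step is the measurability and justification of Step 1: identifying the conditional law of the pair given $X^1_1$ under $P(\alpha)$, when $\alpha$ may depend on $X^1$ and $X^1_1$. The first thing I would try is to write $dP(\alpha)/dR$ on $\bar\cG_1$ via Girsanov, using only the $\beta^2$-component, and to observe that the $\bar\bbG$-law of $X^1$ is unaffected by $\alpha$. A second, smaller difficulty is the regularity of $\rho$ near $\ell$ in the killed case, which is needed to apply \cite{CDbridge}. This is handled by Corollary~\ref{cor.reg}, applicable since $\log f$ grows subquadratically and $f$ is continuous, which also legitimizes differentiating $\rho$ under the integral sign.
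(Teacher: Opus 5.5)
Your proof is correct, but it takes a different route from the paper. The paper argues by verification inside the control class. Because $\rho(1,X^2_1;X^1_1)=f(X_1)$ and $\rho(0,z_0;X^1_1)=f_1(X^1_1)=1$, the quantity $\bbE^{P}[\log\rho(1,X^2_1;X^1_1)]=H(\mu_1|R_1)$ is the same for every $P\in\bar{\cP}(\mu_1)$. It\^o's formula for $\log\rho$, combined with \eqref{e:pdelnh}, then turns $H(P(\alpha)|R)-H(\mu_1|R_1)$ into $\bbE^{P(\alpha)}\big[\frac12\int_0^{1\wedge\zeta}(\alpha_t-\hat\alpha_t)^2\sigma^{-2}(t,X^2_t)\,dt\big]$, once the stochastic integral $M$ is checked to be a true martingale. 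You instead use the static bound $H(P|R)\ge H(P_1|R_1)$ and show that $P(\hat\alpha)=f(X_1)\cdot R$ attains it.

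Two simplifications are available to you.
\begin{itemize}
\item Your Step~1, and with it the obstacle you flag, can be dropped. Applying \eqref{eq.add} with $g=X_1$ gives $H(P|R)=H(\mu_1|R_1)+\int H(P^x|R^x)\,\mu_1(dx)$ for any $P$ with these marginals, with equality iff $P=\int R^x\mu_1(dx)$. This is also the clean uniqueness argument; strict convexity would need convexity of $\bar{\cP}(\mu_1)$, which is not immediate.
\item Attainment is also direct, without conditioning on $X^1_1$. Under $R$, independence of $X^1$ and $X^2$ gives $\rho(t,X^2_t;X^1_1)=\bbE^R[f(X_1)\mid\bar{\cG}_t]$. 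This is a martingale starting at $f_1(X^1_1)=1$ and driven only by the noise of $X^2$. So by Girsanov, $f(X_1)\cdot R$ leaves the $\bar{\bbG}$-dynamics of $X^1$ untouched and adds exactly the drift $\hat\alpha$ to $X^2$; that is, it is $P(\hat\alpha)$.
\end{itemize}

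Your route buys a stronger conclusion for free. Your lower bound holds over all $P$ with the prescribed marginals, and $dP^*/dR=f(X_1)$ by Theorem~\ref{thm:first}. Hence your argument shows at once that $P(\hat\alpha)=P^*$, i.e.\ the constrained and unconstrained problems are equivalent. The paper only reaches this in Theorem~\ref{thm.cunc}, via the progressive enlargement result of Theorem~\ref{theorem:H}. The paper's argument, by contrast, stays entirely within the stochastic-control formulation that mirrors the insider's optimization. It expresses the excess entropy directly as a weighted $L^2$ distance between $\alpha$ and $\hat\alpha$, without disintegrating path measures into bridges.
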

\begin{proof}

Note that $\rho(1,X^2_1;X^1_1)= {f(X^1_1,X^2_1)}$ and $\rho(0,X^2_0;X^1_1)= f_1(X^1_1){=1}$. 
Therefore, for every $P\in\bar\cP(\mu_1)$, $\bbE^{P}[\log \rho(1,X^2_1;X^1_1)]=H(\mu_1|R_1)<\infty$ 
by \eqref{eq.finen}, {and $\bbE^{P}[\log \rho(0,X^2_0;X^1_1)]=0$.}
Thanks to this and to \eqref{eq:alphasqint}, we can  reformulate problem
\eqref{e:minentadpt}
as
\[
{\bar{\mathcal{S}}(\mu_0,\mu_1)=\inf_{P(\alpha) \in\bar\cP(\mu_1) }\bbE^{P(\alpha)}}\bigg[\half\int_0^{1\wedge \zeta(Y)}\frac{\alpha_t^2}{\sigma^2(t,X^2_t)}dt-
\log \rho(1,X^2_1;X^1_1)\bigg].
\]
Note that, for each $z\in E_\ell$, Corollary~\ref{cor.reg} yields $\rho(\cdot,\cdot;z)\in C^{1,2}([0,1)\times E)$ and that $\rho$ satisfies 
\be \label{e:pdeh}
\rho_t + \cL \rho=0, \quad \rho(1, \cdot,z)=f(z,\cdot),
\ee
where  
\be \label{e:L}
\cL=\half \sigma^2 \frac{\partial ^2}{\partial y^2}+ b \frac{\partial}{\partial y}.
\ee
Using \eqref{e:pdeh}, we arrive at
\be \label{e:pdelnh}
\frac{\partial}{\partial t}\log \rho +\cL\log \rho=-\half \sigma^2 \left(\frac {\partial \log\rho}{\partial y}\right)^2.
\ee

Now, {for any $P(\alpha)\in \bar\cP(\mu_1)$, let us define the $(P(\alpha),\bar{\bbG})$-local martingale} $M=(M_t)_{t\in[0,1]}$ by
\[
M_t:=\int_0^{t\wedge \zeta(Y)}\frac{\sigma(s,X^2_s)
\rho_y(s,X^2_s;X^1_1)}{\rho(s,X^2_s;X^1_1)}dW_s,
\]
where $W$ is a {$(P(\alpha),\bar{\bbG})$-}Brownian motion.
It\^o's formula applied to $Y$ under $P(\alpha)$ then yields
\begin{align*}
\log \rho(1,X^2_1;X^1_1)&- \log \rho(0,X^2_0;X^1_1)=\lim_{T\nearrow 1} \log \rho(T,X^2_T;X^1_1)\\
&=\lim_{T\nearrow 1}M_{T} + \int_0^{T\wedge \zeta(Y)}\alpha_t\frac{\rho_y(t,X^2_t;X^1_1)}{\rho(t,X^2_t;X^1_1)}dt-\half\int_0^{T\wedge \zeta(Y)}\frac{\sigma^2(t,X^2_t)\rho^2_y(t,X^2_t;X^1_1)}{\rho^2(t,X^2_t;X^1_1)}dt\\
&=M_{1} + \int_0^{1\wedge \zeta(Y)}\alpha_t\frac{\rho_y(t,X^2_t;X^1_1)}{\rho(t,X^2_t;X^1_1)}dt-\half\int_0^{1\wedge \zeta(Y)}\frac{\sigma^2(t,X^2_t)\rho^2_y(t,X^2_t;X^1_1)}{\rho^2(t,X^2_t;X^1_1)}dt.
\end{align*}
Let us assume for the moment that $M$ is a  true $(P(\alpha),\bar{\bbG})$-martingale. Then 
\begin{multline} \label{e:alt_obj}
	\bbE^{P(\alpha)}\bigg[\half\int_0^{1\wedge \zeta(Y)}\frac{\alpha_t^2}{\sigma^2(t,X^2_t)}dt-\log \rho(1,X^2_1;X^1_1)+ \log \rho(0,X^2_0;X^1_1)\bigg]\\=\bbE^{P(\alpha)}\bigg[\half\int_0^{1\wedge \zeta(Y)}\left(\frac{\alpha_t}{\sigma(t,X^2_t)}-\frac{\sigma(t,X^2_t)\rho_y(t,X^2_t;X^1_1)}{\rho(t,X^2_t;X^1_1)}\right)^2dt\bigg].
\end{multline}
Thus, the proof will be complete as soon as we show that $M$ is a {true $(P(\alpha),\bar{\bbG})$-}martingale.

To this end, observe that
\begin{align*}
\bbE^{P(\alpha)}\bigg[\half\int_0^{1\wedge \zeta(Y)}\left(\frac{\alpha_t}{\sigma(t,X^2_t)}-\frac{\sigma(t,X^2_t)\rho_y(t,X^2_t;X^1_1)}{\rho(t,X^2_t;X^1_1)}\right)^2dt\bigg]=
H(P(\alpha)|P({\hat\alpha}))\\=-\int \log \frac{dP({\hat\alpha})}{dP(\alpha)}dP(\alpha)
=-\int \log \frac{dP({\hat\alpha})}{dR}dP(\alpha)-\int \log \frac{dR}{dP(\alpha)}dP(\alpha)\\=-\bbE^{P(\alpha)}[\log \rho(1,X^2_1;X^1_1)-\log \rho(0,X^2_0;X^1_1)]+H(P(\alpha)|R)<\infty,
\end{align*}
where the last inequality follows from the fact that $(X^1_1,X^2_1)\sim \mu_1$ under any {$P(\alpha)\in \bar\cP(\mu_1)$}. Thus the desired martingale property follows from the It\^o isometry, the triangle inequality and, once again, the fact that $H(P(\alpha)|R)<\infty$.
\end{proof}

\noindent {\bf Relation to equilibrium in the Kyle-Back model.} Recall the Kyle model with transaction costs considered in the Introduction and the description of its equilibrium via the equations \eqref{e:eqdempen}-\eqref{e:Sch1_intro}. In Section~\ref{sect.sch}, we noticed that the equilibrium strategy for the insider in this model can be expressed in terms of the Kantorovic potentials of the
entropic transport problem with both marginals equal to $\eta=\mathcal{N}(0,1)$ and quadratic cost $c_2(x,y)=\frac12(F(x)-y)^2$.

Now, for $b\equiv 0$, $\sigma\equiv 1$, and $\ell=-\infty$ (i.e. no killing),  let us choose the
target measure $\mu_1$ of the constrained \sch problem to be the solution $\pi^\eps:=\pi^\eps(\eta,\eta,c_2)$ of such entropic transport problem.
Then, it is easy to see that   the optimiser of $\bar{\mathcal{S}}(\mu_0,\pi^\eps)$ coincides with the equilibrium insider strategy in the Kyle model with transaction costs; that is, for  $\mu_1=\pi^\eps$, $\hat\alpha$ in Theorem~\ref{thm_cons} equals the insider's strategy that constitutes the drift of equilibrium total order process from \eqref{e:eqdempen}. Indeed, in view of \eqref{eq:denspieps}, the density function $f^\eps$ of $\pi^\eps$ w.r.t. $\eta\otimes\eta$ satisfies $f^\eps(z,y)=\exp{\frac{\phi_F^\eps(z)+\zeta^\eps(y)+F(z)y}{\eps}}$, with $(\phi_F^\eps,\zeta^\eps)$ related to the \sch potentials of the aforementioned entropic optimal transport problem as in \eqref{eq:relphipsi}. Moreover, it satisfies the growth assumption of the preceding theorem in view of Proposition~A.2 in \cite{KpenC}.

\section{Equivalence between constrained and unconstrained bridges}\label{sect.equiv}
In this section, we provide an alternative construction of the solution to the unconstrained \sch problem 
\eqref{P:unconstrainedeps}. While in Section~\ref{sect:unconstr} we used a unique $h$-transform, here we use two separate $h$-transforms, first for $Z$ and then for $Y$,  
relying on a filtration enlargement result that we postpone to Section~\ref{sect:enlarg} to ease the readability of this section. This will also allow us to show the equivalence between this problem and the constrained \sch problem \eqref{e:minentadpt} when the first component of $\mu_1$ is $\eta$.

\subsection{Alternative construction for the unconstrained \sch problem}
We start by noting that, under $\mu_1$, the conditional distribution of $X^2_1$ given $X_1^1$ is given by 
\be \label{x2condeps}
\mu_1(X^2_1\in dy|X^1_1=z)= \frac{f(z,y)}{f_1(z)}\eta(dy), \; y\in E_\ell.
\ee
Next consider the following function,  and observe that this is the function $\rho$ defined in \eqref{eq:rho} 
when $f_1 \equiv 1$, which is the case in the setting of the constrained \sch problem:
\be \label{e:rhodef}
\rho(t,y,z)= \int_{E}\frac{f(z,u)}{f_1(z)}q(t,1;y,u)du + \frac{f(z,\ell)}{f_1(z)}L(t,1;y), \; y\in E_\ell.
\ee

Note that $\rho(0,z_0,z)\equiv 1$ by the definition of $f_1$, and that $\rho(t,\ell,z)= \frac{f(z,\ell)}{f_1(z)}$ since $L(t,1;\ell)=1$.
For each $z$, the function $\rho(\cdot,\cdot,z)$ can be considered as an $h$-function to condition the distribution of $X^2_1$ to be given by \eqref{x2condeps} whenever $X^1_1=z$. 
On the other hand, the $\mu_1$-distribution of $X^1_1$ has density $f_1$ with respect to $\eta$. Thus, one needs to first condition $Z$ from  \eqref{eq:Zref} so that $Z_1$ has such distribution. To this end, consider the map
\[
(t,z)\mapsto\int_{E}f_1(u)q(t,1;z,u)du + f_1(\ell)L(t,1;z),
\]
and note that it coincides with $\pi^2$ defined in Corollary \ref{c:conddist}. 
In the next two lemmas, we successively perform an $h$-transform for $Z$ and for $Y$. In fact, using $\pi^2$ as an $h$-function and using Theorem 2.2 in \cite{CDbridge}, we obtain:
\begin{lemma}
Suppose Assumption \ref{a:main} holds. There exists a unique weak solution to 
\be \label{e:Zhtr}
dZ_t= \chf_{Z_t\in E}\left\{b(t,Z_t) + \sigma^2(t,Y_t)\frac{\pi^2_x(t,Z_t)}{\pi^2(t,Z_t)}\right\}dt + \chf_{Z_t\in E}\sigma(t,Z_t)d\beta_t
\ee
on some probability space $(\Om, \cG, \bbQ)$   supporting a Brownian motion $\beta$.  Moreover, $Z$ is strong Markov with transition dynamics
\be \label{e:trdyn}
\bbQ(Z_t\in dw|Z_s=z)=q(s,t;z,w)\frac{\pi^2(t,w)}{\pi^2(s,z)}dw\chf_{w\in E}+ L(s,t;z)\frac{\pi^2(t,\ell)}{\pi^2(s,z)}\delta_{\ell}(dw).
\ee  
\end{lemma}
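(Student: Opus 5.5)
The plan is to view \eqref{e:Zhtr} as the $h$-transform of the killed diffusion \eqref{eq:Zref} by the space-time harmonic function $\pi^2$, and to reduce both claims to Theorem 2.1 and Theorem 2.2 of \cite{CDbridge}, in the same way as in the proof of Theorem~\ref{thm:first}. (The coefficient $\sigma^2(t,Y_t)$ in the drift should read $\sigma^2(t,Z_t)$. The argument below is for that reading.)

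\textbf{Regularity and harmonicity of $\pi^2$.} By definition, $\pi^2(t,z)=\int_E f_1(u)q(t,1;z,u)\,du+f_1(\ell)L(t,1;z)=\bbE^{t,z}[f_1(Z_1)]$. Under Assumption~\ref{ass:f}, $f$ is continuous and $\log f$ has subquadratic growth. Then $f_1(z)=\int f(z,y)\eta(dy)$ is continuous, by dominated convergence and the Gaussian bounds of Proposition~\ref{prop.reg}. Moreover, $\log f_1$ again has subquadratic growth: for the upper bound, integrate $K e^{\alpha(z^2+y^2)}$ against the Gaussian-tailed $\eta$; for the lower bound, use Jensen's inequality. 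Corollary~\ref{cor.reg} then gives $\pi^2\in C^{1,2}([0,1)\times E)$ and $\lim_{z\to\ell}\int_E f_1(u)q(t,1;z,u)\,du=0$, so that $\pi^2(t,\ell)=f_1(\ell)$. By the Markov property, $\pi^2(t,Z_t)$ is an $R$-martingale. Hence $\pi^2_t+\tfrac12\sigma^2\pi^2_{zz}+b\,\pi^2_z=0$ on $[0,1)\times E$, with $\pi^2(1,\cdot)=f_1$ and $\pi^2(0,z_0)=1$. Also $\pi^2>0$ on $[0,1)\times E$ as soon as $f_1\not\equiv 0$, by positivity of $q$.

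\textbf{Existence, uniqueness and the transition law.} Exactly as in the proof of Theorem~\ref{thm:first}, Assumption~\ref{a:main} together with Theorem A.1 in \cite{CDbridge} gives Assumption 2.1 there, and the smoothness needed at the boundary $\ell$ is handled by localisation in It\^o's formula. Theorem 2.1 of \cite{CDbridge}, applied with $h=\pi^2$, then yields a unique weak solution $(\Om,\cG,\bbQ,\beta,Z)$. Equivalently, $\bbQ$ is defined by $d\bbQ/dR|_{\cF_t}=\pi^2(t,Z_t)$, and Girsanov's theorem turns the martingale $d\pi^2(t,Z_t)=\pi^2_z\sigma\,dW_t$ into the drift $\sigma^2\pi^2_z/\pi^2$. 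The same theorem gives the strong Markov property. The transition function is obtained from the Doob transform: for bounded $g$,
\[
\bbE^\bbQ[g(Z_t)\mid Z_s=z]=\frac{\bbE^{s,z}[g(Z_t)\pi^2(t,Z_t)]}{\pi^2(s,z)}.
\]
Splitting $\{Z_t\in E\}$ from $\{Z_t=\ell\}$ and using \eqref{e:defL} then produces \eqref{e:trdyn}. On the absorbed part, the killed process stays at $\ell$ with weight $\pi^2(t,\ell)=f_1(\ell)$.

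\textbf{Main obstacle.} The delicate step is verifying that $\pi^2$ meets the hypotheses of \cite{CDbridge} near the terminal time and at the killing boundary. Near $t=1$, $\pi^2$ can blow up or degenerate, and near $\ell$ it is not $C^{1,2}$ up to the boundary. The plan is to argue as in Theorem~\ref{thm:first}. First, localise on $[0,T]$ for $T<1$ and on the exit times from compacts of $E$. Second, pass to the limit using the uniform integrability of $\pi^2(t,Z_t)$, which holds because it is a closed martingale with terminal value $f_1(Z_1)\in L^1(R)$. Finally, weak uniqueness on each $[0,T]$ together with consistency of the laws extends the result to $[0,1]$.
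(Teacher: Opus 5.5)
Your proposal is correct and takes the same route as the paper: the paper treats \eqref{e:Zhtr} as the $h$-transform of \eqref{eq:Zref} by $\pi^2$ and cites \cite{CDbridge}, just as in the proof of Theorem~\ref{thm:first}. Your extra steps make explicit what the paper leaves implicit, namely the regularity and harmonicity of $\pi^2$ via Assumption~\ref{ass:f}, the Doob-transform derivation of \eqref{e:trdyn}, and the correction of $\sigma^2(t,Y_t)$ to $\sigma^2(t,Z_t)$.
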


Next, we initially enlarge the natural filtration of $Z$ from \eqref{e:Zhtr} with $Z_1$, thus arriving at the following result. 
\begin{lemma}\label{lemma:tilde}
Suppose Assumption \ref{a:main} holds and consider a filtered probability space $(\Om,\cG, \bbG, \bbQ)$, with $\bbG=(\cG_t)_{t\in [0,1]}$ satisfying the usual conditions, which supports a 2-dimensional standard Brownian motion $(\beta_1,\beta_2)$  as well as a $\cG_0$-measurable random variable $Z_1$ with $\bbQ(Z_1\in dz)=f_1(z)\eta(dz)$.  Then there exits a unique  $\bbG$-adapted  strong solution to 
\begin{equation}\label{eqXtilde}
\begin{split}
    dZ_t&= \chf_{Z_t\in E}\left\{b(t,Z_t) + \sigma^2(t,Z_t)\bigg(\frac{q_y(t,1;Z_t,Z_{1})}{q(t,1;Z_t,Z_{1})}\chf_{Z_{1}\in E}+ \frac{L_z(t,1;Z_t)}{L(t,1;Z_t)}\chf_{Z_{1}=\ell}\bigg)\right\}dt \\
&+\chf_{Z_t\in E}\sigma(t,Z_t)d\beta^1_t, \quad t<1,\quad Z_0=z_0;  \\
dY_t &=\chf_{Y_t\in E}\left\{b(t,Y_t) + \sigma^2(t,Y_t)\frac{\rho_y(t,Y_t,Z_{1})}{\rho(t,Y_t,Z_{1})}dt + \chf_{Y_t\in E}\sigma(t,Y_t)d\beta^2_t\right\},\quad t<1,\quad Y_0=z_0. 
\end{split}
\end{equation}
Moreover, $Z_t \to Z_1$ a.s., and, in its own filtration, $Y$ follows
\begin{equation}\label{eq.Yfil}
dY_t =\chf_{Y_t\in E}\left\{\Big(b(t,Y_t)+\sigma^2(t,Y_t)\frac{\pi^1_y(t,Y_t)}{\pi^1(t,Y_t)}\Big) dt +\sigma(t,Y_t)dB^Y_t\right\},
\end{equation}
where $B^Y$ is a Brownian motion adapted to the natural filtration of $Y$.
\end{lemma}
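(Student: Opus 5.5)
The plan is to treat the two equations in \eqref{eqXtilde} separately. Conditionally on $\cG_0$, i.e. on the value $Z_1=z$, they decouple: the $Z$-equation is driven by $\beta^1$ only and has drift depending on $z$, and the $Y$-equation is driven by $\beta^2$ only and involves $Z_1$ solely through the parameter $z$ in $\rho(\cdot,\cdot,z)$.

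\textbf{Existence and uniqueness.} Fix $z\in E_\ell$. For the $Z$-equation, the drift is the $h$-transform drift of \eqref{eq:Zref} with $h(t,w)=q(t,1;w,z)$ if $z\in E$, and $h(t,w)=L(t,1;w)$ if $z=\ell$. By Proposition~\ref{prop.reg} and Corollary~\ref{cor.reg}, these $h$ are in $C^{1,2}([0,1)\times E)$ and space-time harmonic. Theorem~2.1 of \cite{CDbridge}, applied as in Theorem~\ref{thm:first}, then gives a unique weak solution on $[0,1)$, namely the bridge of $Z$ to $z$ (respectively $Z$ conditioned to be killed before $1$). Since the drift is locally Lipschitz on $[0,T]\times E$ for each $T<1$ (again by the bounds of Proposition~\ref{prop.reg} and $h>0$ in the interior), pathwise uniqueness holds up to exit from compacts. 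Yamada--Watanabe then upgrades weak existence to a unique strong solution on $[0,1)$.

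The $Y$-equation is treated identically with $h=\rho(\cdot,\cdot,z)$. This function is $C^{1,2}$ and solves \eqref{e:pdeh} by Corollary~\ref{cor.reg} together with Assumption~\ref{ass:f}. Measurability of the strong solution map in the parameter $z$ lets us plug in the $\cG_0$-measurable $Z_1$. Independence of $Z_1$ from $(\beta^1,\beta^2)$ then yields a $\bbG$-adapted strong solution.

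\textbf{The limit $Z_t\to Z_1$.} Theorem~2.2 of \cite{CDbridge} identifies the conditional law given $Z_1=z$ as the $R$-bridge ending at $z$. The duality condition \eqref{e:tdduality0} is exactly what that theorem needs to ensure that the bridge converges to its endpoint a.s. For $z=\ell$, the $h$-transform by $L$ forces killing before time $1$, so the path is absorbed at $\ell$ and the convergence is trivial.

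\textbf{The dynamics \eqref{eq.Yfil} of $Y$ in its own filtration.} First compute the law of $(Z_1,Y_1)$. Given $Z_1=z$, $Y_1$ has law $\frac{f(z,y)}{f_1(z)}\eta(dy)$ by the $h$-transform, and $Z_1\sim f_1\,\eta$. Hence $(Z_1,Y_1)\sim\mu_1$, and the marginal law of $Y$ has density
\[
\int \rho(t,Y_t,z)f_1(z)\eta(dz)=\int_{E_\ell}h(t,(z,Y_t))\,R(X^1_1\in dz)
\]
with respect to the law of \eqref{eq:Zref} on $\cF^Y_t$.

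Next, apply filtering, as in Corollary~\ref{c:natdyn}, via \cite[Theorem 8.1]{lipcer2001statistics}. The drift of $Y$ in $\cF^Y$ is
\[
b+\sigma^2\,\bbE\!\left[\frac{\rho_y(t,Y_t,Z_1)}{\rho(t,Y_t,Z_1)}\Big|\cF^Y_t\right].
\]
By Bayes' rule, the conditional law of $Z_1$ given $\cF^Y_t$ is proportional to $\rho(t,Y_t,z)f_1(z)\eta(dz)$. The conditional expectation therefore equals
\[
\frac{\int\rho_y(t,Y_t,z) f_1(z)\eta(dz)}{\int \rho(t,Y_t,z) f_1(z)\eta(dz)}.
\]
It remains to check that the denominator is $\pi^1(t,Y_t)$. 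Using $\int h(t,(w,y))R(X^1_t\in dw)=\bbE^R[f(X^1_1,y')\mid X^2_t=y]$ and the definition \eqref{e:rhodef}, we get $\int\rho(t,y,z)f_1(z)\eta(dz)=\pi^1(t,y)$. Differentiating under the integral, justified by the Gaussian bounds and the subquadratic growth of $\log f$, gives $\pi^1_y$ in the numerator.

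\textbf{Main obstacle.} The delicate step is the well-posedness near the singular times and the boundary, i.e. $t\uparrow1$ and $Y_t\to\ell$. There $\rho$ may lose regularity, so the argument relies on localisation exactly as remarked in the proof of Theorem~\ref{thm:first}.
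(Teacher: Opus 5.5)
Your proposal is correct and follows essentially the same route as the paper: existence of the bridge and $h$-transformed equations via \cite{CDbridge}, strong uniqueness from the locally Lipschitz coefficients and Yamada--Watanabe, then Bayes' formula giving $\bbQ(Z_1\in dz\,|\,\cF^Y_t)\propto\rho(t,Y_t,z)f_1(z)\eta(dz)$, and filtering to obtain the drift $\sigma^2\pi^1_y/\pi^1$. Two minor slips: for the bridge existence step the paper cites Theorem~2.2 and Remark~2.1 of \cite{CDbridge} rather than Theorem~2.1, and in your first density formula $R(X^1_1\in dz)$ should be $R(X^1_t\in dz)$, which is what you correctly use later when identifying $\pi^1$.
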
 
Note that the above decomposition of $Z$ is its Doob-Meyer decomposition when its filtration is initially enlarged with $Z_1$. 
\begin{proof}
In view of \eqref{e:Zhtr} and \eqref{e:trdyn}, it follows from Theorem 2.2 and Remark 2.1 in \cite{CDbridge} that there exits a unique weak solution  with $Z_1=\lim_{t\rar 1}Z_t$. Moreover, the drift and volatility coefficients are locally Lipschitz implying, by Theorem 2.2.8 in \cite{DMB-CD}, pathwise uniqueness until the first exit time from $E$. Thus, by continuity at the exit time, there exists a unique strong solution by Yamada-Watanabe theorem (see, e.g., Theorem 2.2.12 in \cite{DMB-CD}).

Next observe that, conditional on $Z_1=z$, the law of $Y$ given by \eqref{eqXtilde} coincides with that of the $h$-transform of $X^2$ under $R$ via the $h$-function $\rho(\cdot, \cdot, z)$. Thus, using that $\rho(0,z_0,z)=1$,
for $y_i\in E$ for all $i=1,\ldots, n$, we have
\begin{equation}\begin{split}\label{eq.y1nz1}
\bbQ(Y_{t_1}\in dy_1,  \ldots, Y_{t_n}\in dy_n|Z_{1}=z)=\hspace*{4cm}\\
q(0,t_1;z_0,y_1)q(t_1,t_2;y_1,y_2)\ldots q(t_{n-1},t_n;y_{n-1},y_n)\rho(t_n,y_n,z)dy_1\ldots dy_n.
\end{split}
\end{equation}
Similarly, for $y_i\in E$ for all $i=1,\ldots, n-1$, and $y_n=\ell$,
\begin{eqnarray}\label{eq.y1nlz1}\begin{split}
\bbQ(Y_{t_1}\in dy_1,  \ldots, Y_{t_{n-1}}\in dy_{n-1}, Y_{t_n}=\ell|Z_{1}=z)=\hspace*{4cm}\\
q(0,t_1;z_0,y_1)q(t_1,t_2;y_1,y_2)\ldots L(t_{n-1},t_n;y_{n-1})\rho(t_n,\ell,z)dy_1\ldots dy_n.
\end{split}
\end{eqnarray}
Consequently, Bayes' formula yields, for $z\in E_\ell$,
\begin{align*}
  \bbQ(Z_{1}\in dz|Y_0=y_0, Y_{t_1}=y_1, \ldots, Y_{t_n}=y_n)&\propto \bbQ(Y_0\in dy_0,  \ldots, Y_{t_n}\in dy_n|Z_1=z)\bbQ(Z_{1}\in dz)\\  
  &\propto \rho(t_n,y_n,z)f_1(z)\eta(dz).
\end{align*}
This in turn implies
\be \label{e:Z1deltdist} 
\bbQ(Z_{1}\in dz|\cF^Y_t)= \frac{\rho(t,Y_t,z)f_1(z)\eta(dz)}{\int_{E_\ell} \rho(t,Y_t,w)f_1(w)\eta(dw)}
=\frac{\rho(t,Y_t,z)f_1(z)\eta(dz)}{\pi^1(t,Y_t)}.
\ee 
Thus, in its own filtration $Y$ has a drift given by $b(t,Y_t)+\sigma^2(t,Y_t)m(t,Y_t)$, where
\[
m(t,y)=\frac{\int_{E_\ell} \rho_y(t,Y_t,w)f_1(w)\eta(dw)}{\pi^1(t,y)}=\frac{\pi^1_y(t,y)}{\pi^1(t,y)}.
\]
\end{proof}

\subsection{Constrained vs unconstrained problem}
Note that \eqref{eq.Yfil} is the same SDE as the one in \eqref{e:Schd2NF}. In the next theorem we will show that the distribution of $(Z,Y)$ in Lemma~\ref{lemma:tilde} (unique solution to the constrained problem) coincides with $P^*$ from Theorem \ref{thm:first}, i.e. the unique minimiser of \eqref{P:unconstrainedeps} (unique solution to the unconstrained problem).
\begin{theorem}\label{thm.cunc}
 Let $(Z,Y)$ be as in  \eqref{eqXtilde}, and denote by $\bbH=(\cH_t)_{t\in [0,1]}$ the minimal filtration satisfying the usual conditions with respect to which $(Z,Y)$ is adapted. Then
\begin{align}\label{eq.Z}
dZ_t&=\chf_{Z_t\in E}\left\{\sigma(t,Z_t)d\beta_t+
(b(t,Z_t) + \sigma^2(t,Z_t)\frac{h_z(t,(Z_t,Y_t))}{h(t,(Z_t,Y_t))})dt\right\},\\
dY_t&=\chf_{Y_t\in E}\left\{\sigma(t,Y_t)d\tilde \beta_t+
(b(t,Y_t) + \sigma^2(t,Y_t)\frac{h_y(t,(Z_t,Y_t))}{h(t,Z_t,Y_t)})dt\right\},\label{eq.Y}
\end{align}
where $(\beta,\tilde \beta)$
is a 2-dimensional $\bbH$-Brownian motion.
\end{theorem}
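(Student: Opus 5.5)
Our plan is to obtain the $\bbH$-semimartingale decomposition of $(Z,Y)$ by projecting the $\bar\bbG$-dynamics \eqref{eqXtilde} (where $\bbG$ contains $Z_1$) back onto the smaller filtration $\bbH$, which does not see $Z_1$. Concretely, we will invoke the progressive enlargement formula of Theorem~\ref{theorem:H} in Section~\ref{sect:enlarg} (equivalently, standard filtering as in \cite[Theorem 8.1]{lipcer2001statistics}). Since $(Z,Y)$ together with $Z_1$ is jointly Markov, the $\bbH$-drift of $Z$ (resp. $Y$) is the $\bbH$-conditional expectation of the $\bbG$-drift. Hence everything reduces to computing $\bbQ(Z_1\in dz\,|\,\cH_t)$.

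\emph{Step 1: the conditional law of $Z_1$.} We will show that
\[
\bbQ(Z_1\in dz\,|\,\cH_t)=\frac{p_Z(t,Z_t;1,z)\,\rho(t,Y_t,z)\,f_1(z)\,\eta(dz)}{h(t,(Z_t,Y_t))\,\chi(t,Z_t)},
\]
where $p_Z(t,x;1,z)=q(t,1;x,z)/q(0,1;z_0,z)$ (with $L$ replacing $q$ at $z=\ell$) and $\chi$ is the normalisation. This will come from Bayes' formula, exactly as in the proof of Lemma~\ref{lemma:tilde}. Conditional on $Z_1=z$, the finite-dimensional laws of $Y$ are given by \eqref{eq.y1nz1}--\eqref{eq.y1nlz1}. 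The finite-dimensional laws of $Z$ are those of the $R$-bridge of $Z$ to $z$, namely products of $q$'s times $q(t_n,1;z_n,z)/q(0,1;z_0,z)$. Moreover, $Z$ and $Y$ are conditionally independent given $Z_1$. Multiplying these by $f_1(z)\eta(dz)$ and using $\eta(dz)=q(0,1;z_0,z)dz$, the normaliser becomes
\[
\int f_1(z)\rho(t,Y_t,z)q(t,1;Z_t,z)\,dz+f_1(\ell)\rho(t,Y_t,\ell)L(t,1;Z_t).
\]
By \eqref{e:rhodef} this equals $\bbE^R[f(X_1)\,|\,X_t=(Z_t,Y_t)]=h(t,(Z_t,Y_t))$.

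\emph{Step 2: the drifts.} For $Y$ we average $\rho_y/\rho$ against this conditional law. The factors $\rho$ cancel, and differentiating under the integral (justified by Proposition~\ref{prop.reg} and the subquadratic growth of $\log f$, as in Corollary~\ref{c:natdyn}) gives exactly $h_y/h$. For $Z$ we average $q_z(t,1;Z_t,z)/q(t,1;Z_t,z)$ on $\{z\in E\}$ and $L_z/L$ on $\{z=\ell\}$. Again the $q$ and $L$ factors cancel, and we obtain $h_z/h$. The martingale parts $\int\sigma\,d\beta^1$ and $\int\sigma\,d\beta^2$ become, after subtracting the projected drifts, continuous $\bbH$-local martingales. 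Their brackets are unchanged, and their cross-variation is zero, so Lévy's characterisation yields the 2-dimensional $\bbH$-Brownian motion $(\beta,\tilde\beta)$.

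\emph{Main obstacle.} The delicate step is rigorously justifying the projection. The drift $q_z/q$ of the bridge explodes as $t\to1$, so we will work on $[0,T]$ for $T<1$ and up to the exit times from $E$, and then pass to the limit. We also need integrability of $\bbE[|\text{drift}|\,|\,\cH_t]$ so that the optional projection is legitimate; this is precisely what Theorem~\ref{theorem:H} is designed to supply under the joint Markov property. As a consistency check, we expect the conclusion to identify the law of $(Z,Y)$ with $P^*$ of Theorem~\ref{thm:first}, by uniqueness in law for \eqref{e:epsbridge_g}.
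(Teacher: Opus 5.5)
Your argument is correct, but it follows a different route from the paper's, and it is not an application of Theorem~\ref{theorem:H}.

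\textbf{The paper's route.} The paper goes \emph{up} from the natural filtration of $Z$. It verifies Assumptions~\ref{ass:mk}--\ref{ass:rho} and computes the density of $(Y_s,Y_t)$ given $Z_t=u$ as $\Phi(s,t,u,y,z)=P^o(s,t,y,z)\,h(t,(u,z))/\pi^2(t,u)$. Hence $\Gamma(t,u,z)=\partial_u\log h(t,(u,z))-\partial_u\log\pi^2(t,u)$ is independent of $(s,y)$. Adding $\sigma^2\Gamma$ to the own-filtration drift $b+\sigma^2\pi^2_x/\pi^2$ from \eqref{e:Zhtr} gives $b+\sigma^2 h_z/h$. $Y$ is handled symmetrically through the law of $(Z_s,Z_t)$ given $Y_t$.

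\textbf{Your route.} You go \emph{down} from $\bbH\vee\sigma(Z_1)$, where \eqref{eqXtilde} gives explicit drifts. You project them with the innovations theorem (\cite[Theorem 8.1]{lipcer2001statistics}, as in Corollary~\ref{c:natdyn}), using $\bbQ(Z_1\in dz|\cH_t)$. That conditional law is just \eqref{e:Z1deltdist} with the extra bridge factor for $Z$. The Bayes computation already shows it depends only on $(Z_t,Y_t)$.

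\textbf{Comparison.} Your route is shorter and treats $Z$ and $Y$ in a single computation. It needs neither the joint Markov property nor the structural requirement that $\Gamma$ be independent of $(s,y)$, nor the smoothness checks on $\Phi$. The paper's route showcases the general enlargement formula, which applies when no initially enlarged filtration with explicit dynamics is available. It also never has to average the exploding bridge drift $q_z(t,1;Z_t,Z_1)/q(t,1;Z_t,Z_1)$.

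\textbf{Two points to fix.}

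First, the normaliser of the conditional law is $h$ alone:
$\bbQ(Z_1\in dz|\cH_t)=\big(q(t,1;Z_t,z)\rho(t,Y_t,z)f_1(z)\chf_E(z)\,dz+L(t,1;Z_t)\rho(t,Y_t,\ell)f_1(\ell)\,\delta_\ell(dz)\big)/h(t,(Z_t,Y_t))$.
The extra factor $\chi(t,Z_t)$ in your display is spurious, and your own next line contradicts it.

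Second, Theorem~\ref{theorem:H} is not ``equivalent'' to filtering. It does not supply integrability for an optional projection, since it works in the opposite direction, from $\bbF^Z$ up to $\bbH$. What you actually need is a localisation by $\bbH$-stopping times. For example, take $t\le T<1$ with $(Z,Y)$ in a compact set on which $h$ is bounded below. There
$\bbE[|\mathrm{drift}_s|\,|\,\cH_s]=h^{-1}\big(\int_E|q_z(s,1;Z_s,w)|\rho(s,Y_s,w)f_1(w)\,dw+|L_z(s,1;Z_s)|\rho(s,Y_s,\ell)f_1(\ell)\big)$
is bounded, by the Gaussian bounds of Proposition~\ref{prop.reg} and the growth of $\log f$, and similarly for $\rho_y/\rho$. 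With that in place, the projection and the L\'evy characterisation go through as you describe.
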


\begin{proof}The stated decomposition will follow from Theorem \ref{theorem:H} below. Therefore, we are going to show that Assumptions~\ref{ass:mk}, \ref{ass:cond} and \ref{ass:rho} are satisfied in this setting. Throughout this proof, we denote by $\bbE$ the expectation operator with respect to $\bbQ$.

To see that Assumption~\ref{ass:mk} holds, that is, that $Z$ and $Y$ are jointly Markov, let $g$ and $j$ be continuous and bounded and consider for $s<t\leq 1$
\bean
\bbE[g(Z_t)j(Y_t)|\cH_s]&=&\bbE[\bbE[g(Z_t)j(Y_t)|\cH_s, Z_1]|\cH_s]\\
&=&\bbE[\bbE[g(Z_t)|\cF^Z_s, Z_1]\bbE[j(Y_t)|\cF_s^{Y}, Z_1]|\cH_s]\\
 &=&\bbE[\bbE[g(Z_t)|Z_s, Z_1]\bbE[j(Y_t)|Y_s, Z_1]|\cH_s]\\
&=&\bbE[\phi(s,t,Z_s,Z_1)\psi(s,t,Y_s,Z_1)|\cH_s],
\eean
where 
\[   \phi(s,t,Z_s,Z_1):= \bbE[g(Z_t)|Z_s, Z_1] \quad \mbox{ and } \quad \psi(s,t,Y_s,Z_1):=\bbE[j(Y_t)|Y_s, Z_1].
\]
In the chain of equalities above, we used the independence of $Z$ and $Y$ given $Z_1$, that $Z$ is a Markov process, and that $Y$ given $Z_1$ is Markov as well; see Lemma~\ref{lemma:tilde}. Moreover, since $Z$ is Markov and \eqref{e:Z1deltdist} holds, $Z_1$ is independent from $\cH_s$ given $Z_s$ and $Y_s$. This in turn implies
\[
\bbE[g(Z_t)j(Y_t)|\cH_s]=\bbE[\phi(s,t,Z_s,Z_1)\psi(s,t,Y_s,Z_1)|Z_s, Y_s].
\]

In order to verify that Assumptions \ref{ass:cond} and \ref{ass:rho} hold, we shall first compute $\bbQ(Y_s\in dy, Y_t \in dz|\cF^Z_t)$ on the event $\{Z_t\in E\}$. Recall that the transition probabilities for $Z$ under $\bbQ$ are given by \eqref{e:trdyn}. Then,
\bean
&&\bbQ(Y_s\in dy, Y_t \in dz|\cF^Z_t)=\bbE\left[\bbQ(Y_s\in dy, Y_t \in
  dz|\cF^Z_t, Z_1)|\cF^Z_t\right] =\bbE\left[\bbQ(Y_s\in dy, Y_t \in
  dz| Z_1)|\cF^Z_t\right] \\
&&=\chf_{(y,z)\in E^2}P^o(s,t,y,z)\left(\int_E \rho(t,z,w)\frac{q(t,1;Z_t,w)f_1(w)}{\pi^2(t,Z_t)}\,dw +\frac{L(t,1;Z_t)f_1(\ell)\rho(t,z,\ell)}{\pi^2(t,Z_t)} \right) dy dz\\
&&+\chf_{y\in E}\chf_{z=\ell}
P^o(s,t,y,z)\left(\int_E \rho(t,z,w)\frac{q(t,1;Z_t,w)f_1(w)}{\pi^2(t,Z_t)}\,dw +\frac{L(t,1;Z_t)f_1(\ell)\rho(t,z,\ell)}{\pi^2(t,Z_t)} \right) dy\delta_{\ell}(dz)\\
&&+\chf_{y=z=\ell}P^o(s,t,y,z)\left(\int_E \rho(t,\ell,w)\frac{q(t,1;Z_t,w)f_1(w)}{\pi^2(t,Z_t)}\,dw +\frac{L(t,1;Z_t)f_1(\ell)}{\pi^2(t,Z_t)} \rho(t,\ell,\ell)\right)\delta_{\ell}(dy)\delta_{\ell}(dz),
\eean
where
\be \label{e:Porep}
\begin{split}
P^o(s,t,y,z)&=\chf_{(y,z)\in E^2}q(0,s;z_0,y)q(s,t;y,z)\\
&+\chf_{y\in E}\chf_{z=\ell}
q(0,s;z_0,y)L(s,t;y)+\chf_{y=z=\ell}L(0,s;z_0).
\end{split} 
\ee
This shows that \eqref{ass:cond_1} holds.

Moreover, by choosing $m(dy)=\chf_E(y)dy +\delta_{\ell}(dy)$, we see that 
\[
\bbQ(Y_s \in dy, Y_t \in dz |Z_t=u)=\Phi(s,t,u,y,z)\,m(dy)\,m(dz)\quad\text{for all}\, s,t\geq 0,\ y,z,u\in E_\ell,
\]
where
\be \label{e:Prepresent}
\Phi(s,t,u,y,z)=\frac{P^o(s,t,y,z)}{\pi^2(t,u)}\left(\int_E \rho(t,z,w)q(t,1;u,w)f_1(w)\,dw +L(t,1;u)f_1(\ell)\rho(t,z,\ell)\right),
\ee
so Assumption~\ref{ass:cond} is satisfied.

Next, we are going to show that Assumption~\ref{ass:rho} is satisfied.  Observe that, by means of Chapman-Kolmogorov identity,
\begin{align*}
G(y,w):=\int_{E_\ell} P^o(s,t,y,z) \rho(t,z,w)  f_1(w)m(dz)&=\chf_{y\in E}q(0,s;z_0,y)\int_E f(w,x)q(s,1;y,x)dx\\
&+ \chf_{y\in E}q(0,s;z_0,y)f(w,\ell)L(s,1;y)\\
&+ \chf_{y=\ell}f(w,\ell)L(0,s;z_0),
\end{align*}
and note that the above does not depend on $t$. Thus, Assumption~\ref{ass:rho}-(i) will be satisfied as soon as, for each $y$, 
\[
(t,u)\mapsto \int_E G(y,w)q(t,1;u,w)\,dw + G(y,\ell) L(t,1;u)
\]
belongs to $C^{1,2}([0,1)\times E,\bbR)$. To this end, first observe that, for $\alpha < \frac{C}{4(1-s)}$, where  $C$ is the constant in Proposition \ref{prop.reg}-(2), there exists a constant $K$ such that $f(w,x)\leq K\exp(\alpha (x+w)^2)$. 
Thus,
\[
\int_E f(w,x)q(s,1;y,x)dx\leq K \int_E \exp(\alpha (x+w)^2q(s,1;y,x)dx \leq K_0(y)\exp(2 \alpha w^2),
\]
where $K_0(y)$ is a finite constant depending continuously on $y$. Therefore, the desired smoothness follows from the dominated convergence theorem, in view of the bounds on $q$ and its derivatives given in Proposition~\ref{prop.reg}. Similar arguments show that  Assumption~\ref{ass:rho}-(ii) holds, too.

Also observe that the $u$-derivative of $\frac{\Phi}{P^o}$ is locally bounded due to the Gaussian bounds on the transition density $q$ (see Proposition \ref{prop.reg}-(\ref{qintUB})) and since  $\pi^2$ is strictly positive. Moreover, $P^o(s,t,\cdot,\cdot) \in L^1(E_\ell^2, dm\otimes dm)$ by direct calculations using Chapman-Kolmogorov identity. 
Therefore, Assumption~\ref{ass:rho}-(iii) holds as well.

Next, using the definition of $\rho$, one obtains
\[
\int_E \rho(t,z,w)q(t,1;u,w)f_1(w)\,dw +L(t,1;u)f_1(\ell)\rho(t,z,\ell)=h(t,(u,z)).
\]
Thus, \eqref{e:Prepresent} implies
\[
\Phi(s,t,u,y,z)=P^o(s,t,y,z)\frac{h(t,(u,z))}{\pi^2(t,u)}.
\]

As all conditions are satisfied, we can apply Theorem~\ref{theorem:H}, that yields the desired decomposition \eqref{eq.Z} of $Z$ under the enlarged filtration $\bbH$. 

Similar calculations establish the result for $Y$. We compute the relevant conditional distribution when there is no killing and leave the remaining details to the reader. To determine the extra drift that $Y$ receives  when enlarging its filtration progressively with $Z$, we compute, using \eqref{e:Z1deltdist},
\begin{align*}
\bbQ(Z_s\in dx, Z_t\in dz|&Y_t=y)=\bbE[\bbQ(Z_s\in dx, Z_t\in dz|Z_{1})|Y_t=y] \\
&=\frac{q(0,s; z_0,x)q(s,t;x,z)}{\pi^1(t,y)}\int_E \frac{\rho(t,y,w)f_1(w)q(0,1;z_0,w)q(t,1;z,w)}{q(0,1;z_0,w)}dw\\
&=\frac{q(0,s; z_0,x)q(s,t;x,z)}{\pi^1(t,y)}h(t,(z,y)).
\end{align*}
\end{proof}

\section{Limiting \sch bridges}\label{sect.lim}
The aim of this section is to relate the equilibrium problem in the classical Kyle model (without penalties) to some \sch bridge problem. We will see that this needs to be done via approximation, by introducing auxiliary equilibrium problems with penalty, and then letting the penalty go to zero.
Indeed,  if we interpret the canonical process $(X^1,X^2)$ as the  private signal of the insider and the demand process, then in equilibria considered in \eqref{e:kyleB} and \eqref{e:eqdemdef}, the joint distribution of $(X^1_1,X^2_1)$ is given by
\[
\mu_1(dx_1,dx_2)=\eta(dx_1)\delta_{x_1}(dx_2),
\]
which is clearly singular with respect to $\eta\otimes\eta$; that is,  the distribution of $(X^1_1,X^2_1)$ under the reference measure $R$ set at the beginning of Section~\ref{sect:c_and_unc}.

This means that a \sch problem with reference measure $R$ and such target distribution $\mu_1$ is ill-posed. In order to circumvent this problem, we are going to consider a sequence of target distributions $(\mu_1^{\eps})_{\eps>0}$ on $E^2$ (or  $E_{\ell}^2$ if $\ell$ is finite) that are absolutely continuous with respect to $\eta\otimes\eta$ and converge weakly to $\mu_1$: 
\begin{equation}\label{eq.mueps}
 \mu^{\eps}_1\ll\eta\otimes\eta\quad \forall\; \eps>0,\quad\quad  \mu^\eps_1\rightharpoonup\mu_1,
\end{equation}
and study the associated \sch problems.
We use similar notations to \eqref{eq.dens}, and denote the Radon-Nikodym density of $\mu^\eps_1$ with respect to $R_1$  by $f^\eps$, and the Radon-Nikodym densities of its marginals with respect to $\eta$  by $f^\eps_1$ and $f^\eps_2$, respectively.
\begin{assumption}\label{a:feps} The measures $\{\mu^\eps_1\}_{\eps>0}$ satisfy the following conditions:
\begin{enumerate}
  \item  For each $\eps>0$,  $f^\eps$ satisfies Assumption~\ref{ass:f};
  \item For $\eps\to 0$, $f^\eps_1$ and $f^\eps_2$ converge to $1$ in $L^1(\eta)$;
\item The mass assigned to the atoms of $\mu_1^{\eps}$ satisfies 
\begin{align*}
    \lim_{\eps\to 0}\mu_1^{\eps}(E\times\{\ell\})+ \mu_1^{\eps}(\{\ell\}\times E)=0, \mbox{ and}\\ \lim_{\eps\rar 0}\mu^\eps_1(\{\ell\}\times\{\ell\})=\mu(\{\ell\}\times\{\ell\})=L(0,1;z_0).
\end{align*}
\end{enumerate}  
\end{assumption}

The next theorem identifies the limit of the minimisers of the Schr\"odinger problems $\mathcal{S}_E(\mu_0,\mu_1^\eps)$ as $\eps\rar 0$.
To this end, let us denote
\be \label{e:h0def}
h^0(t,(z,y)):=\int_{E}\frac{
q(t,1;z,w)q(t,1;y,w)}{q(0,1;z_0,w)}dw +\frac{L(t,1;z)L(t,1;y)}{L(0,1;z_0)}.
\ee

\begin{theorem}\label{thm:conv} Suppose Assumptions~\ref{a:main},\ref{ass:f} and  \ref{a:feps} hold. 
As $\eps\to0$, the solutions of the Schr\"odinger problems $\mathcal{S}_E(\mu_0,\mu_1^\eps)$
weakly converge to the distribution of
\begin{equation}
\label{e:epsbridge_g_lim}
\begin{split}
X_t &=x_0+\int_0^t \mathbf{I}(u, X_u)\cdot
\left\{\mathbf{b}(u,X_u) + \Sigma^2(u,X_u)\frac{(\nabla h^0(u,X_u))^*}{h^0(u,X_u)}\right\} du\\
&\hspace{1cm}+\int_0^t \mathbf{I}(u, X_u)\cdot\Sigma(u,X_u)d\beta_u,
\end{split}
	\end{equation}
	where $x_0=(z_0,z_0)$, and $\beta$ is a 2-dimensional standard Brownian motion.
\end{theorem}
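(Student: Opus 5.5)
By Theorem~\ref{thm:first}, the minimiser $P^\eps$ of $\mathcal{S}_E(\mu_0,\mu_1^\eps)$ is the law of the $h$-transform of $R$ by $h^\eps(t,x)=\bbE^R[f^\eps(X_1)\mid X_t=x]$. Equivalently, $P^\eps(d\omega)=\int R^{x}(d\omega)\,\mu_1^\eps(dx)$ is the $\mu_1^\eps$-mixture of the bridges $R^x$ of $R$ ending at $x$. The plan is to identify the limit at the level of finite-dimensional distributions on $[0,1)$, then obtain tightness, and finally recognise the limit as the law of \eqref{e:epsbridge_g_lim}.

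First I will rewrite the limit in the same mixture form. Set $\mu_1(dx_1,dx_2)=\eta(dx_1)\delta_{x_1}(dx_2)$. A direct computation with the densities gives
\[
\int p(t,1;x,x')\,\frac{\mu_1(dx')}{R_1(dx')}\text{``}dx'\text{''}
=\int_E \frac{q(t,1;z,w)q(t,1;y,w)}{q(0,1;z_0,w)}\,dw+\frac{L(t,1;z)L(t,1;y)}{L(0,1;z_0)}=h^0(t,(z,y)).
\]
Hence $P^0:=\int R^x\,\mu_1(dx)$ satisfies $\frac{dP^0}{dR}\big|_{\cF_t}=h^0(t,X_t)$ for $t<1$, with $h^0(0,x_0)=1$. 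Since $h^0$ is space-time harmonic for the generator of $R$ and is $C^{1,2}$ on $[0,1)\times E^2$ by Proposition~\ref{prop.reg}, Girsanov's theorem on $[0,T]$ for every $T<1$ shows that $P^0$ solves \eqref{e:epsbridge_g_lim} there. Uniqueness in law of this SDE on each $[0,T]$ follows as in Theorem~\ref{thm:first}, via \cite{CDbridge} applied to the smooth $h$-function $h^0$. This identifies the law on $[0,1)$, and path continuity at $t=1$ extends it to $[0,1]$.

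Next I will prove convergence of the finite-dimensional distributions. For $t_1<\dots<t_n<1$ and bounded continuous $G$,
\[
\bbE^{P^\eps}[G(X_{t_1},\dots,X_{t_n})]=\int G(x_1,\dots,x_n)\,p(0,t_1;x_0,x_1)\cdots p(t_{n-1},t_n;x_{n-1},x_n)\,h^\eps(t_n,x_n)\,dx.
\]
It therefore suffices to show $h^\eps(t,\cdot)\to h^0(t,\cdot)$ in $L^1(R_t)$ for each fixed $t<1$. Now $h^\eps(t,x)=\int k_t(x,x')\,\mu_1^\eps(dx')$, where $k_t(x,x')=p(t,1;x,x')/(q(0,1;z_0,x_1')q(0,1;z_0,x_2'))$ on $E^2$, with the obvious modification using $L$ at the atoms. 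For fixed $t<1$ the kernel $k_t$ is continuous in $x'$. The difficulty is that it is unbounded: the Gaussian ratio grows like $\exp(c|x'|^2)$, and it blows up near $\ell$. I will handle this in two pieces. First, the boundary atoms: parts of Assumption~\ref{a:feps}(3) show that the mixed atoms vanish and that the $(\ell,\ell)$ atom converges to $L(0,1;z_0)$, which yields the second term of $h^0$. Second, the interior: I will integrate in $x$ against $R_t$ first. By Chapman–Kolmogorov, $\int k_t(x,x')R_t(dx)=1$, so $x'\mapsto k_t(\cdot,x')$ is a continuous map into the unit sphere of $L^1(R_t)$. Weak convergence $\mu_1^\eps\rightharpoonup\mu_1$, together with the uniform integrability supplied by Assumption~\ref{a:feps}(2) (marginals with densities converging in $L^1(\eta)$, hence tight and uniformly integrable in each coordinate), then gives the $L^1$ convergence, after truncating to compacts in $E^2$ away from $\ell$. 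This step is the main obstacle. I would first try to dominate $k_t(x,x')$ by $c\,q(t,1;x_1,x_1')/q(0,1;z_0,x_1')$ times a bounded factor, using $\sup q<\infty$ away from the diagonal from Proposition~\ref{prop.reg}(3), and then exploit the $L^1(\eta)$ convergence of $f^\eps_1$.

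Finally, I will establish tightness on the path space $\Omega_E$. On $[0,T]$ with $T<1$, the $L^1$ convergence of the densities $h^\eps(T,X_T)$ with respect to $R$ gives total-variation convergence $P^\eps|_{\cF_T}\to P^0|_{\cF_T}$. Near $t=1$ I will control the modulus of continuity uniformly in $\eps$ through the mixture-of-bridges representation. Each bridge $R^x$ has increments on $[T,1]$ whose Kolmogorov-type moments are bounded by the Gaussian bounds of Proposition~\ref{prop.reg}(2), locally uniformly in $x$. Combined with the tightness of $\{\mu_1^\eps\}$, this yields uniform control of oscillations on $[T,1]$. Tightness together with finite-dimensional convergence on a dense set of times then gives weak convergence to $P^0$, the law of \eqref{e:epsbridge_g_lim}.
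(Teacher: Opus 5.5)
Your proposal is correct in outline, but it takes a heavier route than the paper at both main steps.

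\textbf{Finite-dimensional distributions.} The paper never proves convergence of $h^\eps$. It conditions on the endpoint and writes $\bbE^R[\phi(X_{t_1},\dots,X_{t_n})f^\eps(X_1)]=\int g_\phi\,d\mu_1^\eps$ with $g_\phi(x)=\bbE^R[\phi(X_{t_1},\dots,X_{t_n})\mid X_1=x]$. This function is bounded by $\|\phi\|_\infty$ and continuous on $E^2$, so weak convergence of $\mu_1^\eps$ applies at once, after the boundary atoms are split off with Assumption~\ref{a:feps}(3). In your notation $g_\phi(x')=\int \tilde G\,k_{t_n}(\cdot,x')\,dR_{t_n}$. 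So the paper's step is just the weak form of your $L^1(R_t)$ claim, and the unboundedness of $k_t$ that you call the main obstacle never enters.

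\textbf{Tightness.} The paper uses that $R$-bridges factor into independent one-dimensional bridges. This bounds $P^\eps(w(X,\delta)>c)$ by $\int R(w(X^1,\delta)>c/2\mid X^1_1=z)f^\eps_1(z)\,\eta(dz)$ plus the analogous $f^\eps_2$ term. Then $f^\eps_i\to 1$ in $L^1(\eta)$ sends this to $R(w(X^1,\delta)>c/2)+R(w(X^2,\delta)>c/2)$, with no bridge estimates at all.

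\textbf{Your route also closes.}
\begin{itemize}
\item Your norm-one observation plus Scheff\'e gives continuity of $x'\mapsto k_t(\cdot,x')$ into $L^1(R_t)$ on $E^2$.
\item Skorokhod representation and bounded convergence then give $h^\eps(t,\cdot)\to h^0(t,\cdot)$ in $L^1(R_t)$. For this you must check, from weak convergence and Assumption~\ref{a:feps}(3), that $\mu_1^\eps|_{E^2}\rightharpoonup\mu_1|_{E^2}$. That is, no interior mass may accumulate at $\partial E^2_\ell$, where (for finite $\ell$) the bridge law is discontinuous in the endpoint.
\item Your domination fallback is unnecessary. It would also fail as stated, because $q(t,1;x_2,x_2')/q(0,1;z_0,x_2')$ is not jointly bounded.
\item Near $t=1$, the Gaussian upper bounds do give $\bbE^{R^{x'}}|X_t-X_s|^4\le C(t-s)^2/p(0,1;x_0,x')$. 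You additionally need a lower bound for $q(0,1;z_0,\cdot)$ on compacts and the same boundary-mass control.
\end{itemize}

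\textbf{What each approach buys.} Your route yields total-variation convergence of $P^\eps$ on $\cF_T$ for every $T<1$. It identifies the limit explicitly as the law of \eqref{e:epsbridge_g_lim}, which the paper leaves implicit. It also does not really use Assumption~\ref{a:feps}(2). The paper's argument is shorter because it only ever integrates bounded functions against $\mu_1^\eps$.
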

Recall that, by Theorem~\ref{thm:first}, the solution to \eqref{e:epsbridge_g} with $h$ replaced by
\[
h^\eps(t,(z,y)):=\bbE^R\bigg[f^\eps(X^1_1,X^2_1) \Big|(X^1_t,X^2_t)=(z,y)\bigg]
\]
is the solution to the \sch problem $\mathcal{S}_E(\mu_0,\mu_1^\eps)$. Therefore, the convergence in Theorem~\ref{thm:conv} boils down to the weak convergence of solutions of SDEs.

\begin{proof}
Let $P^\eps$ be the solution of the Schr\"odinger problem $\mathcal{S}_E(\mu_0,\mu^\eps)$. We want to first prove tightness of the set of the measures $\{P^\eps\}_\eps$, by showing that for any $c>0$
\[
\lim_{\delta\to 0}\limsup_{\eps\to 0} P^\eps(w(X^\eps,\delta)>c)=0,
\]
where 
\[
w(X^\eps,\delta)=\sup_{s,t\in[0,1] :  |s-t|\leq \delta}\|X^\eps_s-X^\eps_t\|.
\]
First, 
\begin{align*}
&E^R\left[
1_{\{w(X,\delta)>c\}}f^{\eps}(X^1_1,X^2_1)\right]=E^R\left[E^R\left[
1_{\{w(X,\delta)>c\}}f^{\eps}(X^1_1,X^2_1)\big|X^1_1, X^2_1\right]\right]\\
&=\int_{E^2_{\ell}}R\left(w(X,\delta)>c|X^1_1=z,X^2_1=y\right)f^{\eps}(z,y)\eta(dz)\eta(dy)\\
&
\leq
\int_{E^2_{\ell}}\left(R\left(w(X^1,\delta)>\frac{c}{2}\middle|X^1_1=z\right)
+R\left(w(X^2,\delta)>\frac{c}{2}\middle|X^2_1=y\right)\right)
f^{\eps}(z,y)\eta(dz)\eta(dy)
\\
&
=\int_{E_\ell}R\left(w(X^1,\delta)>\frac{c}{2}\middle|X^1_1=z\right)f^\eps_1(z)\eta(dz) + \int_{E_\ell}R\left(w(X^2,\delta)>\frac{c}{2}\middle|X^2_1=y\right)f^\eps_2(y)\eta(dy).
\end{align*}
By the uniform integrability hypothesis in Assumption \ref{a:feps}, taking the limit for $\eps\to 0$, the above converges to 
\begin{align*}
\int_{E_\ell}R\left(w(X^1,\delta)>\frac{c}{2}\middle|X^1_1=z\right)\eta(dz) + \int_{E_\ell}R\left(w(X^2,\delta)>\frac{c}{2}\middle|X^2_1=y\right)\eta(dy)\\
=R\left(w(X^1,\delta)>\frac{c}{2}\right) +R\left(w(X^2,\delta)>\frac{c}{2}\right).
\end{align*}
This completes the proof of the tightness of $\{P^\eps\}_\eps$ since, for $i=1, 2$, $R(\{w(X^i,\delta)>c/2\})\rar 0$ as $\delta\to 0$,  due to the tightness of the law of $X^i$s under $R$.

Thus, to determine the limiting distribution, it suffices to consider the limit of 
\[
\bbE^R[\phi(X_{t_1},\ldots,X_{t_n})f^{\eps}(X_1)]=\bbE^R[\phi(X_{t_1},\ldots,X_{t_n})h^\eps(t_n, X_{t_n})]
\]
as $\eps \to 0$, for any $0\leq t_1\leq\ldots\leq t_n<1$ and continuous and bounded function $\phi$. We shall show that the above limit is given by
\be \label{e:limh0}
\bbE^R[\phi(X_{t_1},\ldots,X_{t_n})h^0(t_n, X_{t_n})].
\ee
To do so, first observe that 
\[
g_\phi (x)= \bbE^R[\phi(X_{t_1},\ldots,X_{t_n})|X_1=x]
\]
is bounded and continuous on $E^2$. Indeed, denoting the transition function of $X$ under $R$ on $E_\ell^2$ by $(R_{s,t}(\cdot, \cdot))_{0\leq s\leq t\leq 1}$, we obtain for $x\in E^2$ that 
\[
g_\phi (x)=\frac{\int_{E_{\ell}\times \ldots E_{\ell}}\phi(x_1,\ldots,x_n)R_{0,t_1}((z_0,z_0),dx_1)R_{t_1,t_2}(x_1,dx_2)\ldots R_{t_{n-1},t_n}(x_{n-1},dx_n)p(t_n,1;x_n,x)}{p(0,1;(z_0,z_0),x)}.
\]
The desired continuity then follows from the continuity of the transition density $q$ and the boundedness of $\phi$. 

Similarly, for $x\in \del E_{\ell}^2$, 
\[
g_\phi (x)=\frac{\int_{E_{\ell}\times \ldots E_{\ell}}\phi(x_1,\ldots,x_n)R_{0,t_1}((z_0,z_0),dx_1)R_{t_1,t_2}(x_1,dx_2)\ldots R_{t_{n-1},t_n}(x_{n-1},dx_n)J(t_n,1;x_n)}{J(0,1;(z_0,z_0))},
\]
where $J(t,1;x):=R(X_1\in \del E_{\ell}^2|X_t=x)$. Thus,
\begin{align}
    \lim_{\eps\rar 0} &\bbE^R[\phi(X_{t_1},\ldots,X_{t_n})f^{\eps}(X_1)]=\lim_{\eps \to 0}\int_{E_{\ell}^2}g_\phi(x)f^{\eps}(x)\eta\otimes\eta(dx)=\lim_{\eps \to 0}\int_{E_{\ell}^2}g_\phi(x)\mu_1^\eps(dx)\nn \\
    &=\lim_{\eps \to 0}\mu_1^{\eps}(E^2)\int_{E^2}g_\phi(x)\frac{\mu_1^\eps(dx)}{\mu_1^\eps(E^2)}+\lim_{\eps \to 0}\int_{\del E_{\ell}^2}g_\phi(x)\mu_1^\eps(dx)\nn \\
    &=\int_E g_{\phi}(z,z)q(0,1;z_0,z)dz+g_\phi((\ell,\ell))L(0,1;z_0),\label{e:limfdd}
\end{align}
where the last equality follows from the assumption that $\mu_1^{\eps}(E\times\{\ell\})+ \mu_1^{\eps}(\{\ell\}\times E)\to 0$ as $\eps \to 0$, and that $\lim_{\eps\rar 0}\mu_1^\eps(\{\ell\}\times\{\ell\})=\mu_1(\{\ell\}\times\{\ell\})=L(0,1;z_0)$. Also note that $\mu_1^\eps/\mu_1^\eps(E^2)$ is a probability measure on $E$ converging weakly to $\mu_1/\mu_1(E^2)$ under Assumption \ref{a:feps}.

On the other hand, using the definition of $g_\phi$,
\[
\int_E g_{\phi}(z,z)q(0,1;z_0,z)dz=\bbE^R\left[\phi(X_{t_1},\ldots,X_{t_n})\int_E\frac{q(t_n,1;X^1_{t_n},z)q(t_n,1;X^2_{t_n},z)}{q(0,1;z_0,z)}dz\right].
\]
Moreover, 
\[
g_\phi((\ell,\ell))= \bbE^R\left[\phi(X_{t_1},\ldots,X_{t_n})\frac{L(t_n,1;X^1_{t_n})L(t_n,1;X^2_{t_n})}{L^2(0,1;z_0)}\right].
\]
Consequently,  $\bbE^R[\phi(X_{t_1},\ldots,X_{t_n})h^0(t_n, X_{t_n})]$ coincides with \eqref{e:limfdd}, which completes the proof. 
\end{proof}

\begin{remark} \label{r:Kyle} Note that, from Lemma~\ref{lemma:tilde} applied to $\mu_1^\eps$, the joint distribution of the pair of processes in \eqref{eqXtilde}, with $\rho$ replaced by 
\begin{equation*}
\rho^\eps(t,y,z):= \int_{E}\frac{f^\eps(z,u)}{f^\eps_1(z)}q(t,1;y,u)du + \frac{f^\eps(z,\ell)}{f^\eps_1(z)}L(t,1;y), \; y\in E_\ell,
\end{equation*}
converges as $\eps \to 0$  to the distribution of $(Z,Y)$, where
\be \label{e:Kyle2} \begin{split}
dZ_t &=\chf_{Z_t\in E}\bigg\{b(t,Z_t) + \sigma^2(t,Z_t)\bigg(\frac{q_z(t,1;Z_t,Z_{1})}{q(t,1;Z_t,Z_{1})}\chf_{Z_{1}\in E}+ \frac{L_z(t,1;Z_t)}{L(t,1;Z_t)}\chf_{Z_{1}=\ell}\bigg)\bigg\}dt \\
&+\chf_{Z_t\in E}\sigma(t,Z_t)d\beta^1_t, \quad Z_0=z_0,\\
    dY_t &=\chf_{Y_t\in E}\bigg\{b(t,Y_t) + \sigma^2(t,Y_t)\bigg(\frac{q_z(t,1;Y_t,Z_{1})}{q(t,1;Y_t,Z_{1})}\chf_{Z_{1}\in E}+ \frac{L_z(t,1;Y_t)}{L(t,1;Y_t)}\chf_{Z_{1}=\ell}\bigg)\bigg\}dt \\
&+ \chf_{Y_t\in E}\sigma(t,Y_t)d\beta^2_t, \quad Y_0=z_0,
\end{split}
  \ee  
$(\beta^1,\beta^2)$ is a 2-dimensional $\bar{\bbG}$-Brownian motion, and $\bar{\bbG}$ is the minimal filtration satisfying the usual conditions generated by $(\beta^2,Z)$ and $Z_1$. 
\end{remark}

The above weak convergence result in particular allows us to understand the sensitivity of the value of private information on transaction costs in  the Kyle-Back model,  which is defined to be  the reservation value for a risk-neutral uninformed trader of knowing the exact value of $V$. Note that without any private information on $V$, the expected terminal wealth of any strategic trader is $0$ since prices remain constant and equal to $\bbE[V]$ all the time due to he demand for the asset being independent of $V$ in the absence of insider trading. Thus, while the insider's expected profit is calculated conditional on the realisation of her private signal $V=v$, the value of information equals the unconditional expectation of the terminal profit. This unconditional expectation amounts to the  the maximum value that a risk-neutral uninformed trader would pay to receive this private information.
\begin{corollary}
    \label{cor.convinsprofit}  Consider the Kyle model with quadratic penalties from Section \ref{s:kyle} parametrised by $\eps>0$. As $\eps\rar 0$, the value of private information converges to that in the classical Kyle model without quadratic transaction costs.
    \end{corollary}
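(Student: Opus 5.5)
Throughout I work in the setting of the Introduction: $b\equiv0$, $\sigma\equiv1$, $\ell=-\infty$, $V=F(Z_1)$, and $\eta$ is standard Gaussian. The first step is to write the value of information as an explicit functional of the joint law of $(Z_1,Y_1)$. In the Kyle--Back equilibrium the price is $S_t=H(t,Y_t)$ with $H(t,y)=\bbE[V\mid Y_t=y]$. Since $Y$ is a Brownian motion in its own filtration in both models, $S$ is a martingale with $S_1=H(1,Y_1)$. The insider's terminal profit is $\int_0^1 (V-S_t)\,d\theta_t$, minus the penalty $\frac{\eps}{2}\int_0^1\alpha_t^2dt$ in the cost model.

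Using $\theta=Y-B$, the martingale property of $S$ under the market filtration, and the fact that $B$ is independent of $V$, the unconditional expected profit reduces to
\[
\bbE[V Y_1]-\bbE\Big[\int_0^1 H_y(t,Y_t)\,dt\Big]-\frac{\eps}{2}\bbE\Big[\int_0^1\alpha_t^2\,dt\Big].
\]
Here the middle term equals $\bbE[H(1,Y_1)Y_1]$ by integration by parts. In the classical case it equals $\bbE[VZ_1]$, as in Back's computation of the value of information. In the penalised model, \cite{KpenC} gives the analogous expression in terms of $\mu_1^\eps=\pi^\eps(\eta,\eta,c_2)$: the value of information is $\int F(z)y\,\mu^\eps_1(dz,dy)$ minus a price term, minus the penalty term. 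Combining these identities with the $\mu_1^\eps$-representation of $\bbE[H^\eps(1,Y_1)Y_1]$, I will identify the penalty term with $\eps H(\mu_1^\eps|\eta\otimes\eta)$. Here $H^\eps$ is the equilibrium pricing rule with costs; I expect it to coincide with the $\zeta^\eps$-based one from \eqref{e:Sch1_intro} and \eqref{eq:relphipsi}. This matches the fact that $\frac{\eps}{2}\bbE\int\alpha^2$ equals $\eps H(P(\hat\alpha)|R)$ by \eqref{eq:alphasqint} and Theorem~\ref{thm_cons}.

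The second step is convergence of each term as $\eps\to0$. I will check Assumption~\ref{a:feps} for $\mu_1^\eps=\pi^\eps(\eta,\eta,c_2)$. Its marginals are exactly $\eta$, so $f^\eps_1=f^\eps_2=1$. There are no atoms. The weak convergence $\mu_1^\eps\rightharpoonup\gamma(dz)\delta_z(dy)$ is the standard $\Gamma$-convergence of entropic OT to OT: the optimal plan for $c_2$ with increasing $F$ is the monotone (identity) coupling. Theorem~\ref{thm:conv} together with Remark~\ref{r:Kyle} then gives weak convergence of the laws of $(Z,Y)$, in particular of $(Z_1,Y_1)$.

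To pass to the limit in $\int F(z)y\,\mu_1^\eps(dz,dy)$, I need uniform integrability. This follows from Cauchy--Schwarz and the fixed marginals: $\int F(z)^2\,d\mu_1^\eps=\int F^2d\eta$ and $\int y^2\,d\mu_1^\eps=1$, assuming $\bbE[V^2]<\infty$. So this term converges to $\int F(z)z\,\eta(dz)=\bbE[VZ_1]$, the classical value.

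The remaining, and I expect the hardest, step is to show that the entropic term $\eps H(\pi^\eps|\eta\otimes\eta)\to0$, and likewise the price term. For the entropic term I plan to use the sandwich
\[
\mathrm{OT}\le \int c_2\,d\pi^\eps\le \int c_2\,d\pi^\eps+\eps H(\pi^\eps|\eta\otimes\eta)=\mathrm{EOT}_\eps\le \int c_2\,d\pi^\delta+\eps H(\pi^\delta|\eta\otimes\eta),
\]
where $\pi^\delta$ is a block approximation of the identity coupling with finite entropy. Choosing $\delta$ first and then $\eps\to0$ gives $\mathrm{EOT}_\eps\to\mathrm{OT}$, and hence $\eps H(\pi^\eps|\eta\otimes\eta)\to0$. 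The price term is handled by the same weak-convergence and uniform-integrability argument. Summing the three limits shows that the value of information with costs converges to $\bbE[VZ_1]$, which is its value in the classical Kyle model.
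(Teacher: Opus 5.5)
\textbf{Genuine gap: the profit decomposition contains a spurious term, and the argument relies on it.} You correctly rewrite your middle term as $\bbE[\int_0^1 H_y(t,Y_t)\,dt]=\bbE[H(1,Y_1)Y_1]$. But $H(1,Y_1)=S_1=\bbE[V\mid\cF^Y_1]$ in \emph{both} models, so this term equals $\bbE[VY_1]$ and cancels your first term exactly. Your formula would then give value of information $0$ in the classical model, and $-\frac{\eps}{2}\bbE\int_0^1\alpha_t^2\,dt\le 0$ with costs.

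For the same reason, your claim that the ``price term'' tends to $0$ is false. The weak-convergence and uniform-integrability argument you use for the first term shows it tends to $\bbE[VZ_1]$. So your three limits actually sum to $0$, not to $\bbE[VZ_1]$.

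In fact there is no price term. Writing $d\theta_t=\alpha_t\,dt=dY_t-dB_t$,
\[
\bbE\Big[\int_0^1 S_t\,d\theta_t\Big]=\bbE\Big[\int_0^1 S_t\,dY_t\Big]-\bbE\Big[\int_0^1 S_t\,dB_t\Big]=0 .
\]
The first expectation vanishes because $S$ is adapted to $\bbF^Y$, in which $Y$ is a Brownian motion. The second vanishes because $S$ is also adapted to the insider's filtration, in which $B$ is a Brownian motion. Hence the value of information is
\[
\bbE[VY_1]-\frac{\eps}{2}\bbE\Big[\int_0^1\hat\alpha_t^2\,dt\Big]=\int F(z)y\,\mu_1^\eps(dz,dy)-\eps H(\mu_1^\eps|\eta\otimes\eta).
\]
Here $\frac12\bbE\int_0^1\hat\alpha_t^2\,dt=H(P(\hat\alpha)|R)=H(\mu_1^\eps|R_1)$, obtained by taking $\alpha=\hat\alpha$ in \eqref{e:alt_obj} and recalling $f^\eps_1\equiv 1$.

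\textbf{With this correction, your proof works by a different route from the paper's.} Your two limit arguments complete it: uniform integrability of $F(z)y$ over couplings with fixed marginals, and
\[
\eps H(\pi^\eps|\eta\otimes\eta)\le \mathrm{EOT}_\eps-\mathrm{OT}\to 0 .
\]
The paper instead starts from the potential formula $-\int\phi^\eps_F\,d\gamma-\int\zeta^\eps\,d\gamma$ of \cite{KpenC}. It then uses $L^1(\gamma)$-convergence of the Schr\"odinger potentials to the unique Kantorovich potentials. By \eqref{eq:relphipsi} and $\int(\varphi^\eps+\psi^\eps)\,d\gamma=\mathrm{EOT}_\eps(\gamma,\gamma,c_2)$, that formula equals $\int\frac{F(z)^2+z^2}{2}\,\gamma(dz)-\mathrm{EOT}_\eps$. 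This is exactly your corrected identity, obtained without stochastic calculus. Your route then needs only the primal convergence $\mathrm{EOT}_\eps\to\mathrm{OT}$, not convergence or uniqueness of dual potentials.

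\textbf{Remark on the limiting value.} The optimal plan for $c_2$ is the identity coupling, so $\mathrm{OT}(\gamma,\gamma,c_2)=\frac12\int(F(z)-z)^2\gamma(dz)$, which vanishes only if $F=\mathrm{Id}$. Both routes therefore give the limit $\int F(z)z\,\gamma(dz)=\bbE[VZ_1]$, which is your target. The paper's displayed value $\int\frac{F(y)^2+y^2}{2}\gamma(dy)$ agrees with this only for $F=\mathrm{Id}$. For example, with $F(z)=2z$ the classical value is $2$, not $5/2$.
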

\begin{proof}
     Theorem~5.1 in \cite{KpenC} shows that, given the realization of $V=v$, the insider's expected profit from trading equals
      \[
   -\phi^\eps_F(F^{-1}(v))-\int\zeta^\eps(y)\gamma(dy),
   \]
    where $\gamma$ is the standard normal distribution and the potentials $(\phi_F^\eps,\zeta^\eps)$ are solving \eqref{e:Sch1_alt}.
    Thus, the value of this private information for an uninformed strategic trader equals
  \begin{equation}\label{eq.profits}
   -\int\phi_F^\eps(z)\gamma(dz)-\int\zeta^\eps(y)\gamma(dy).
   \end{equation}

   Note that, for $\eps=0$, the non-regularized problem $OT(\gamma,\gamma,c_2)$ has a unique solution (of Monge form, with optimal map given by $T=F^{-1}$), and its dual problem has a unique solution too, say $(\varphi,\psi)$. Therefore, by \cite[Theorem 5.16]{Marcelnotes}, for $\eps\to 0$, the unique solution $(\varphi^\eps, \psi^\eps)$  to the regularized transport problem $\text{EOT}_\eps(\gamma,\gamma,c_2)$ satisfies $\varphi^\eps\to \varphi$ and $\psi^\eps\to\psi$ in $L^1(\gamma)$. Recalling the relation \eqref{eq:relphipsi} between $(\phi_F^\eps,\zeta^\eps)$ and $(\varphi^\eps, \psi^\eps)$,
the insider's expected profit \eqref{eq.profits} in the quadratic penalties case converges to 
   \[
    -\int\varphi(z)\gamma(dz)+\int \frac{F(z)^2}{2}\gamma(dz)-\int\psi(y)\gamma(dy)+\int \frac{y^2}{2}\gamma(dy)=\int \frac{F(y)^2+y^2}{2}\gamma(dy),
   \]
 which is the insider's ex-ante profit in the case without penalties, see Theorem 5.1 in \cite{CD-GKB}.
 Indeed, the fact that $\int\varphi+\psi\  d\gamma=0$ follows from duality and the fact that the optimal cost for the problem $OT(\gamma,\gamma,c_2)$ is zero.
\end{proof}

\begin{example} 
Consider the setting of Corollary \ref{cor.convinsprofit} and further assume that $V$ is standard normal, i.e. $F=Id$.
 In this case, Example~4.2 in \cite{KpenC} shows that
   \[
   \phi_F^\eps(z)=-\frac{\eps}{2}\log\frac{\eps}{\eps+\lambda}-\frac{z^2}{2(\eps+\lambda)}\mbox{ and } \zeta^\eps(y)=-\frac{\lambda y^2}{2},
   \]
   where
   \[
   \lambda = \frac{-\eps+\sqrt{\eps^2 + 4}}{2}.
   \]
   In view of the discussion at the end of Section~\ref{sect:c_and_unc}, if we set $\mu_1^\eps=\pi^\eps$, where $\pi^\eps$ is the optimal coupling of the associated entropy regularized optimal transport problem, the optimizer of $\bar{\mathcal{S}}(\mu_0,\mu_1^\eps)$ (see  \eqref{e:minentadpt})  is given by
   \[
\hat\alpha_t=\frac{\rho^\eps_y(t,X^2_t;X^1_1)}{\rho^\eps(t,X^2_t;X^1_1)},
   \]
   where 
   \[
   \rho^\eps(t,y,z)=\int \exp\Big(\frac{uz+\phi_F^\eps(z)+\zeta^\eps(u)}{\eps}\Big)\frac{1}{\sqrt{2\pi(1-t)}}\exp\Big(-\frac{(y-u)^2}{2(1-t)}\Big)du.
   \]
   Then, straightforward calculations show that
   \[
   \hat\alpha_t=\lambda \frac{X^1_1-\lambda X^2_t}{1-t\lambda^2},
   \]
   which in particular agrees with the optimal insider strategy given in Corollary 5.3 in \cite{KpenC}. Moreover, it converges to the Brownian bridge strategy of the classical Kyle model (see \eqref{e:kyleB}) when $\eps\to 0$.
   \end{example}

\section{Progressive enlargement with processes}\label{sect:enlarg}
Let $W$ be a standard Brownian motion on a filtered probability space $(\Om, \cF, \bbF, \bbP)$ and consider the diffusion process $Z$ defined by \eqref{eq:Zref}.  We consider enlarging the filtration $\bbF=(\cF^Z_t)_{t\in [0,1]}$ with $\bbG$, where $\bbF$ is the minimal filtration satisfying the usual conditions with respect to which $Z$ is adapted, and $\bbG$ is the natural filtration of another process $Y$. 

We are interested in the $\bbH$-canonical decomposition of $Z$ where
$\bbH=(\cH_t)_{t\in [0,1]}$ and $\cH_t=\sigma(\cF^Z_s, \cG_s, s\leq t)$. We denote the state space of $Y$ by $J$ and  assume the following
Markov structure.
\begin{assumption}\label{ass:mk} The pair $(Z,Y)$ is Markov with respect to $\bbH$.
\end{assumption}
The following assumption on the conditional distribution of $Y$ will be needed.
\begin{assumption}\label{ass:cond}  For any continuous bounded function $g:J\mapsto \bbR$ and $s \leq t$,
\begin{equation}\label{ass:cond_1}
\bbE[g(Y_s, Y_t)|\cF^Z_t]=\bbE[g(Y_s,Y_t)|Z_t].
\end{equation}
Moreover, there exist a Borel measure $m$ on $J$ and a function $\Phi$ on $[0,1]^2\times E_\ell\times J^2$ such that $\Phi(s,t,u,.,.)$ is strictly positive for all $s,t\geq 0, u\in E_\ell\ m\otimes m$-a.e., and
\[
\bbP(Y_s \in dy, Y_t \in dz |Z_t=u)=\Phi(s,t,u,y,z)\,m(dy)\,m(dz)\quad\text{for all } s,t\geq 0,\ u\in E_\ell,\ y,z\in J.
\]
\end{assumption}
Under Assumption~\ref{ass:cond}, for any bounded Borel measurable $g$ and $s\in[0,1]$, the process $M^g$ defined
by
\[
M^g_t:=\bbE[g(Y_s)|\cF^Z_t]=\int_{J}\int_{J}g(y)  \Phi(s,t,Z_t,y,z)m(dz)m(dy),\quad t \geq s,
\]
is a martingale time-indexed by $t$.
We also make the following assumption on $\Phi$:

\begin{assumption}\label{ass:rho}  For all continuous functions $g:J\to\bbR$ with compact support, for each $(s,t)\in [0,1)^2$ with $s<t$, and $dm\otimes dm$-a.a. $(y,z)\in J^2$, we have 
\begin{itemize}
\item[(i)] $\int_{J}\int_{J}g(y)  \Phi(s,\cdot,\cdot,y,z)m(dz)m(dy) \in C^{1,2}([0,1)\times E,\bbR)$,
\item[(ii)] $\Phi(s,t,\cdot ,y,z)\in C^{1}( E,\bbR_{++})$,  and
\item[(iii)] $\forall x \in E\; \exists \eps>0$  such that $\sup_{u \in (x-\eps,x+\eps)}|\Phi_u(s,t,u,\cdot,\cdot)|\in L^1(J^2,dm\otimes dm)$.
\end{itemize}
Moreover, there exists a function
$\Gamma$, independent of $(s,y)$,
such that 
\[
\Gamma(t,u,z)=\frac{\Phi_u(s,t,u,y,z)}{\Phi(s,t,u,y,z)}.
\]
\end{assumption}

In view of this assumption, we obtain  by It\^o's formula 
\[
M_t^g=M_s^g +\int_s^t \int_{J} \int_{J} g(y)\Phi_x(s,r,Z_r,y,z)m(dz)m(dy)\chf_{Z_r\in E}\sigma(r,Z_r)dW_r.
\]

We use this in order to prove the following theorem.
\begin{theorem}\label{theorem:H}
Let $Z$ be the diffusion process defined by \eqref{eq:Zref} with $s=0$.
Under Assumptions~\ref{ass:mk}-\ref{ass:rho}, the $\bbH$-decomposition of $Z$ is given by
\[
Z_t=Z_0+\int_0^t\chf_{Z_r\in E}\sigma(r,Z_r)d\beta_r + \int_0^t\chf_{Z_r\in E}(b(r, Z_r)+\sigma^2(r,Z_r) \Gamma(r,Z_r,Y_r)) dr,
\]
where $\beta$ is an $\bbH$-Brownian motion.
\end{theorem}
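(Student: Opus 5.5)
The plan is to follow the standard route for progressive enlargement. We identify the $\bbH$-drift of the $\bbF$-martingale part $\int_0^\cdot \chf_{Z_r\in E}\sigma(r,Z_r)dW_r$ by computing, for a generating class of $\cH_s$-measurable random variables, the covariation of $W$ with the $\bbF$-martingales $M^g$. Write $N_t:=\int_0^t\chf_{Z_r\in E}\sigma(r,Z_r)dW_r$. Since $\cH_s$ is generated by $\cF^Z_s$ together with the variables $g(Y_{s_1}),\dots,g(Y_{s_k})$ with $s_i\le s$, a monotone class argument reduces the problem to showing the following. For $s\le t$, bounded $\cF^Z_s$-measurable $\xi$, and products of such functionals of $Y$ (by Assumption~\ref{ass:mk} and the Markov structure, a single time $Y_u$, $u\le s$, paired with $\xi$ should suffice), we want
\[
\bbE\Big[\xi g(Y_u)(N_t-N_s)\Big]=\bbE\Big[\xi g(Y_u)\int_s^t \chf_{Z_r\in E}\sigma^2(r,Z_r)\Gamma(r,Z_r,Y_r)dr\Big].
\]
Once this is established, $\beta:=\int_0^\cdot \sigma^{-1}\,d\big(N-\int \chf\sigma^2\Gamma\,dr\big)$ on $\{Z\in E\}$, extended by an independent increment after killing if needed, is a continuous $\bbH$-martingale with quadratic variation $t$. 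Lévy's characterization then gives the result.

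For the key identity, condition on $\cF^Z_t$. The left side equals $\bbE[\xi (N_t-N_s) M^g_t]$ with $M^g_t=\bbE[g(Y_u)|\cF^Z_t]$, using \eqref{ass:cond_1}. Since $\xi(N_t-N_s)$ is orthogonal to $M^g_s$, this is $\bbE[\xi\langle N,M^g\rangle_s^t]$. By the Itô representation displayed just before the theorem, which is justified by Assumption~\ref{ass:rho}(i),(iii) allowing differentiation under the integral, we have
\[
\langle N,M^g\rangle_s^t=\int_s^t\chf_{Z_r\in E}\sigma^2(r,Z_r)\int\!\!\int g(y)\Phi_u(u,r,Z_r,y,z)\,m(dz)m(dy)\,dr.
\]
Now use $\Phi_u=\Gamma(r,Z_r,z)\Phi$ to rewrite the inner integral as $\bbE[g(Y_u)\Gamma(r,Z_r,Y_r)|\cF^Z_r]$, applying Assumption~\ref{ass:cond} with the pair $(Y_u,Y_r)$. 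Taking expectations against $\xi$ and using Fubini then gives the right side. The independence of $\Gamma$ from $(s,y)$ is exactly what makes the drift a function of $(r,Z_r,Y_r)$ alone.

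To pass from single times $Y_u$ to general $\cH_s$-generators $g_1(Y_{u_1})\cdots g_k(Y_{u_k})$, invoke Assumption~\ref{ass:mk}. It suggests conditioning on $(Z_s,Y_s)$, but more cleanly one can extend the argument to multi-time functionals by reading Assumption~\ref{ass:cond} as a statement about a joint density whose $u$-log-derivative depends only on the last coordinate. Here Assumption~\ref{ass:mk} is used to reduce the past of $Y$ to $Y_s$: $\bbE[(N_t-N_s)-\int_s^t\cdots|\cH_s]$ is a function of $(Z_s,Y_s)$, so it is enough to test against $\xi=\phi(Z_s)$ and $g(Y_s)$, i.e.\ $u=s$. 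This reduction is the step I expect to be the main obstacle. Care is also needed with integrability: we must ensure $\int_0^1 |\sigma^2\Gamma|(r,Z_r,Y_r)dr<\infty$ a.s. so that the drift is well defined. This should follow from the $L^1$ bound in Assumption~\ref{ass:rho}(iii) combined with localization by stopping times before killing.
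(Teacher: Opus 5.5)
Your proposal is correct and follows essentially the same route as the paper. You use the $\bbH$-Markov property to reduce to testing against $g(Z_s)j(Y_s)$, and you condition on $\cF^Z_t$ to replace $j(Y_s)$ by the $\bbF$-martingale $M^j$. You then compute its covariation with $\int\chf_{Z\in E}\sigma\,dW$ from the It\^o representation of $M^j$, and use $\Phi_x=\Gamma\Phi$ to rewrite the integrand as $\bbE[j(Y_s)\Gamma(r,Z_r,Y_r)\,|\,\cF^Z_r]$. The reduction you flag as the main obstacle is handled by the paper in one line via the Markov assumption, exactly as you argue, and your localization remark corresponds to the paper's stopping times $\tau_n$.
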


\begin{proof}
In order to find the $\bbH$-decomposition of $Z$ we need to find
$\bbE\left[\int_s^t \chf_{Z_r\in E}\sigma(r,Z_r)dW_r | \cH_s\right]$. However, due to the Markov assumption, we only
need to compute the expectations of the following form:
\[
\bbE\left[\int_s^t \chf_{Z_r\in E}\sigma(r,Z_r)dW_r g(Z_s)j(Y_s)\right]
\]
for any continuous  functions $g, j$ with compact support.
Moreover, since $Z$ is continuous and  killed upon exiting $E$, it suffices to consider 
\[
\bbE\left[\int_s^{t\wedge \tau_n} \chf_{Z_r\in E}\sigma(r,Z_r)dW_r g(Z_s)j(Y_s)\right],
\]
where $\tau_n:=\inf\{t\geq s: Z_t \notin ((\ell+\frac{1}{n})\vee -n, n)\}$.

To this end, we have
\bean
&&\bbE\left[\int_s^{t\wedge \tau_n} \sigma(r,Z_r)dW_r g(Z_s)j(Y_s)\right]=\bbE\left[\int_s^{t\wedge \tau_n} \sigma(r,Z_r)dW_r g(Z_s)\bbE[j(Y_s)|\cF^Z_{t\wedge \tau_n}]\right]\\
&=&\bbE\left[\int_s^{t\wedge \tau_n} \sigma(r,Z_r)dW_r g(Z_s)M^g_{t\wedge \tau_n}\right]=
\bbE\left[\int_s^{t\wedge \tau_n}\sigma(r,Z_r)dW_r g(Z_s)(M^g_t-M^g_s)\right]
\\
&=&\bbE\left[\int_s^{t\wedge \tau_n} \sigma(r,Z_r)dW_r g(Z_s)
\int_s^{t\wedge \tau_n} \int_{J^2} j(y)\Phi_x(s,r,Z_r,y,z)m(dz)m(dy)\sigma(r,Z_r)dW_r\right]
\\
&=&\bbE\left[\int_s^{t\wedge \tau_n} \sigma(r,Z_r)dW_r g(Z_s)
\int_s^{t\wedge \tau_n}\int_{J^2} j(y)\Gamma(r,Z_r,z) \Phi(s,r,Z_r,y,z)m(dz)m(dy)\sigma(r,Z_r)dW_r\right]
\\
&=&\bbE\left[g(Z_s)
\int_s^{t\wedge \tau_n}\int_{J^2}j(y)\Gamma(r,Z_r,z) \Phi(s,r,Z_r,y,z)m(dz)m(dy)\sigma^2(r,Z_r)dr \right]
\\
&=&\bbE\left[g(Z_s)\int_s^{t\wedge \tau_n}\sigma^2(r,Z_r)\bbE\left[
j(Y_s)\Gamma(r,Z_r,Y_r)|\cF^Z_r\right]
dr\right]\\
&=&\bbE\left[g(Z_s)j(Y_s)\int_s^{t\wedge \tau_n}
\sigma^2(r,Z_r)  \Gamma(r,Z_r,Y_r)\,dr\right].
\eean
This completes the proof.
\end{proof}

The next example shows how some of the earlier works on
progressive enlargement with random times can be studied using the
method above.
\begin{example} Let $\tau$ be a random time, which is not an
  $\bbF$-stopping time. Define $Y_t:=\tau \wedge t$. Assume $(Z,Y)$ 
  is $\bbH$-Markov.  Note that
\[
\bbP(Y_s \in dy, Y_t \in dz|\cF_t)=\bbP(Y_s \in dy|\cF_t,
Y_t=z)\bbP(Y_t \in dz|\cF_t).
\]
Moreover, given $Y_t$, $Y_s$ becomes non-random for $s \leq t$ so that
we have
\[
\bbP(Y_s \in dy, Y_t \in dz|\cF_t)=\delta_{s \wedge z}(dy) \bbP(Y_t \in
dz|\cF_t).
\]
  Therefore, whether the assumption of Theorem \ref{theorem:H} is satisfied can be checked directly by just looking at $\bbP(Y_t \in
dz|\cF_t)$.

Next, let us take a look at the specific case when $\tau= V(T)$, where $Z$ is a standard Brownian motion,
$T=\inf\{t>0:Z_t=1\}$, and $V$ is a strictly increasing, continuous
and deterministic
function such that $V(t)<t$ for all $t\geq 0$.  Then, for $z \leq t$,
\bean
\chf_{[T>t]}\bbP(Y_t \in dz|\cF_t)&=&\chf_{[T>t]}\bbP(Y_t \in dz, t > V(T)|\cF_t)+\chf_{[T>t]}\bbP(Y_t
\in dz, t \leq V(T)|\cF_t)\\
&=&\chf_{[T>t]}\bbP(T \in dV^{-1}(z)
|\cF_t)+\chf_{[T>t]}\bbP(T \geq V^{-1}(t)|\cF_t) \delta_t(dz).
\eean
Recall that
\[
 \bbP(\tau>t+s|\cF_t)=H(s,Z_t),
\]
where, for $x <1$,
\[
H(s,x)=\int_s^{\infty} \frac{1-x}{\sqrt{2 \pi
     r^3}}e^{-\frac{(1-x)^2}{2 r}}\, dr.
 \]
Then, on the set $[\tau >t]$,
\[
\bbP(Y_t \in dz|\cF_t)=
\chf_{z \in (V(t),t)}\frac{1}{V'(V^{-1}(z))}\frac{1-Z_t}{\sqrt{2 \pi
     (V^{-1}(z)-t)^3}}e^{-\frac{(1-Z_t)^2}{2 (V^{-1}(z)-t)}} dz +
 H(V^{-1}(t)-t,Z_t)\delta_t(dz) .
\]
Hence, the $\bbH$-decomposition of $Z$ is given by
\bean
Z_t=\beta_t+ \int_0^{\tau\wedge t} \frac{H_x(V^{-1}(s)-s,Z_s)}{H(V^{-1}(s)-s,Z_s)}\,
  ds 
-\int_{\tau}^{\tau) \vee t}\left\{ \frac{1}{1-Z_s} -
\frac{1-Z_s}{T -s}\right\}\, ds.
\eean
The second integral implies that after time $\tau$, $1-B$ is a
3-dimensional Bessel bridge. This certainly makes sense. Indeed, since
$V$ is invertible, $T$ becomes known after time $V(T)$ and the
result becomes unsurprising due to the relationship between Bessel bridges and
first hitting times of Brownian motions.
\end{example}
Another application of the formulae above gives the joint decomposition of insider's signal and the equilibrium total demand process when the insider's private signal consists of a standard Brownian motion $Z$ with its end value $Z_1$.
\begin{example} 
Suppose $Z$ is a standard Brownian motion and
  time runs from $0$ to $1$. Let $B$ be another Brownian motion independent of $Z$, and  suppose
 that $Y$ solves with respect to a larger filtration
\[
Y_t=B_t + \int_0^t \frac{Z_1 - Y_s}{1-s}ds.
\]
Recall that  $\bbG$ denote the natural filtration of $Y$ and observe that 
$Y$ is a $\bbG$-Brownian motion (see Lemma 3 in \cite{Back92}). Moreover, $Y_1=Z_1 \, \bbP$-a.s., and $Y$ and $Z$ are independent conditional on $Z_1$. Thus,
\bean
\bbP(Y_s\in dy, Y_t \in dz|\cF_t)&=&\bbE\left[\bbP(Y_s\in dy, Y_t \in
  dz|\cF_t, Y_1)|\cF_t\right] \\
&=&\bbE\left[\bbP(Y_s\in dy, Y_t \in
  dz| Y_1)|\cF_t\right] \\
&=&\int_{\bbR}
\frac{q(s,0,y)q(t-s,y,z)q(1-t,z,w)}{q(1,0,w)}q(1-t,Z_t,w)\,dw dy dz .
\eean
Therefore,
\[
\Phi(s,t,u,y,z)=q(s,0,y)q(t-s,y,z) \int_{\bbR}
\frac{q(1-t,z,w)}{q(1,0,w)}q(1-t,u,w)\,dw
\]
satisfies  Assumption \ref{ass:rho} and
\[
\Gamma(t,u,z)=\frac{\int_{\bbR}
\frac{q(1-t,z,w)}{q(1,0,w)}q_u(1-t,u,w)\,dw}{\int_{\bbR}
\frac{q(1-t,z,w)}{q(1,0,w)}q(1-t,u,w)\,dw }.
\]
Hence, in the filtration jointly generated by $Y$ and $Z$, $Z$ will follow
\[
dZ_t=d\beta^1_t + \Gamma(t,Z_t,Y_t)dt
\]
for some standard Brownian motion $\beta^1$. The dynamics of $Y$ in the joint filtration can be found by symmetry:
\[
dY_t=d\beta^2_t + \Gamma(t,Y_t,Z_t) dt,
\]
where $\beta^2$ is another Brownian motion independent of $\beta^1$.
 \end{example}

\bibliographystyle{siam}
\bibliography{ref}

@article{leonard2012,
  title={From the Schr{\"o}dinger problem to the Monge--Kantorovich problem},
  author={L{\'e}onard, Christian},
  journal={Journal of Functional Analysis},
  volume={262},
  number={4},
  pages={1879--1920},
  year={2012},
  publisher={Elsevier}
}

@article{sch31,
  author  = {Schr{\"o}dinger, Erwin},
  title   = {{\"U}ber die Umkehrung der Naturgesetze},
  journal = {Sitzungsberichte der Preussischen Akademie der Wissenschaften, Physikalisch-mathematische Klasse},
  volume  = {144},
  pages   = {144--153},
  year    = {1931}
}

@inproceedings{sch32,
  title={Sur la th{\'e}orie relativiste de l'{\'e}lectron et l'interpr{\'e}tation de la m{\'e}canique quantique},
  author={Schr{\"o}dinger, Erwin},
  booktitle={Annales de l'institut Henri Poincar{\'e}},
  volume={2},
  number={4},
  pages={269--310},
  year={1932}
}

@article{Marcelnotes,
  title={Introduction to entropic optimal transport},
  author={Nutz, Marcel},
  journal={Lecture notes, Columbia University},
  volume={306},
  number={19},
  pages={307},
  year={2021}
}

@book{lipcer2001statistics,
  title={Statistics of Random Processes II: II. Applications},
  author={Liptser, Robert S and Shiryaev, Albert Nikolaevi{\v{c}}},
  volume={2},
  year={2001},
  publisher={Springer Science \& Business Media}
}

@article{Dai91,
  title={A stochastic control approach to reciprocal diffusion processes},
  author={Dai Pra, Paolo},
  journal={Applied mathematics and Optimization},
  volume={23},
  number={1},
  pages={313--329},
  year={1991},
  publisher={Springer}
}

@incollection{F88,
  title={Random fields and diffusion processes},
  author={F{\"o}llmer, Hans},
  booktitle={{\'E}cole d'{\'E}t{\'e} de Probabilit{\'e}s de Saint-Flour XV--XVII, 1985--87},
  pages={101--203},
  year={1988},
  publisher={Springer}
}

@article{CCdef,
	title={Insider trading in an equilibrium model with default: a passage from reduced-form to structural modelling},
	author={Campi, Luciano and \c{C}etin, Umut},
	journal={Finance and Stochastics},
	volume={11},
	number={4},
	pages={591--602},
	year={2007},
	publisher={Springer}
}

@book{DMB-CD,
	title={Dynamic Markov Bridges and Market Microstructure: Theory and Applications},
	author={{\c{C}}etin, Umut and Danilova, Albina},
	volume={90},
	year={2018},
	publisher={Springer}
}

@article{Kyle,
	title={Continuous auctions and insider trading},
	author={Kyle, Albert S},
	journal={Econometrica: Journal of the Econometric Society},
	pages={1315--1335},
	year={1985},
	publisher={JSTOR}
}

@article{Back92,
	title={Insider trading in continuous time},
	author={Back, Kerry},
	journal={The Review of Financial Studies},
	volume={5},
	number={3},
	pages={387--409},
	year={1992},
	publisher={Oxford University Press}
}

@article{BP98,
	title={Long-lived information and intraday patterns},
	author={Back, Kerry and Pedersen, Hal},
	journal={Journal of financial markets},
	volume={1},
	number={3-4},
	pages={385--402},
	year={1998},
	publisher={Elsevier}
}

@article{CDRA,
	title={Markovian Nash equilibrium in financial markets with asymmetric information and related forward--backward systems},
	author={{\c{C}}etin, Umut and Danilova, Albina},
	journal={The Annals of Applied Probability},
	volume={26},
	number={4},
	pages={1996--2029},
	year={2016},
	publisher={Institute of Mathematical Statistics}
}

@article{CCDdef,
	title={Equilibrium model with default and dynamic insider information},
	author={Campi, Luciano and {\c{C}}etin, Umut and Danilova, Albina},
	journal={Finance and Stochastics},
	volume={17},
	number={3},
	pages={565--585},
	year={2013},
	publisher={Springer}
}

@article{D,
	title={Stock market insider trading in continuous time with imperfect dynamic information},
	author={Danilova, Albina},
	journal={Stochastics An International Journal of Probability and Stochastics Processes},
	volume={82},
	number={1},
	pages={111--131},
	year={2010},
	publisher={Taylor \& Francis}
}

@article{CX13,
	title={Point process bridges and weak convergence of insider trading models},
	author={{\c{C}}etin, Umut and Xing, Hao},
	journal={Electronic Journal of Probability},
	volume={18},
	year={2013},
	publisher={The Institute of Mathematical Statistics and the Bernoulli Society}
}

@article{CD-GKB,
	title={On Pricing Rules and Optimal Strategies in General Kyle--Back Models},
  author={{\c{C}}etin, Umut and Danilova, Albina},
  journal={SIAM Journal on Control and Optimization},
  volume={59},
  number={5},
  pages={3973--3998},
  year={2021},
  publisher={SIAM}
}

@article{choRA,
	title={Continuous auctions and insider trading: uniqueness and risk aversion},
	author={Cho, Kyung-Ha},
	journal={Finance and Stochastics},
	volume={7},
	number={1},
	pages={47--71},
	year={2003},
	publisher={Springer}
}

@article{BERA,
	title={Kyle-{B}ack Models with risk aversion and non-{G}aussian Beliefs},
	author={Bose, Shreya and Ekren, Ibrahim},  journal={The Annals of Applied Probability},
  volume={33},
  number={6A},
  pages={4238--4271},
  year={2023},
  publisher={Institute of Mathematical Statistics}
}

@article{CetRH,
	title={Financial equilibrium with asymmetric information and random horizon},
	author={{\c{C}}etin, Umut},
	journal={Finance and Stochastics},
	volume={22},
	number={1},
	pages={97--126},
	year={2018},
	publisher={Springer}
}

@article{CDbridge,
  title={Markov bridges: {SDE} representation},
  author={{\c{C}}etin, Umut and Danilova, Albina},
  journal={Stochastic Processes and their Applications},
  volume={126},
  number={3},
  pages={651--679},
  year={2016},
  publisher={Elsevier}
}

@book{Pinsky,
  title={Positive harmonic functions and diffusion},
  author={Pinsky, Ross G},
  volume={45},
  year={1995},
  publisher={Cambridge university press}
}

@article{KpenC,
title = {Insider trading with penalties in continuous time},
journal = {Journal of Economic Theory},
volume = {228},
pages = {106061},
year = {2025},
issn = {0022-0531},
doi = {https://doi.org/10.1016/j.jet.2025.106061},
url = {https://www.sciencedirect.com/science/article/pii/S0022053125001073},
author = {\c{C}etin, Umut},
}

@article{BackEkren,
  title={Optimal Transport and Informed Trading in Securities Markets},
  author={Back, Kerry and Cocquemas, Fran{\c{c}}ois and Ekren, Ibrahim and Lioui, Abraham},
  journal={Available at SSRN 3628726},
  year={2024}
}

@article{EMZsv,
  title={Kyle’s model with stochastic liquidity},
  author={Ekren, Ibrahim and Mostowski, Brad and {\v{Z}}itkovi{\'c}, Gordan},
  journal={Finance and Stochastics},
  volume={29},
  number={4},
  pages={1195--1231},
  year={2025},
  publisher={Springer}
}

@article{BEmultdRA,
  title={Multidimensional Kyle--Back model with a risk averse informed trader},
  author={Bose, Shreya and Ekren, Ibrahim},
  journal={SIAM Journal on Financial Mathematics},
  volume={15},
  number={1},
  pages={93--120},
  year={2024},
  publisher={SIAM}
}

@article{choilarsenkyle,
  title={Trading constraints in continuous-time Kyle models},
  author={Choi, Jin Hyuk and Kwon, Heeyoung and Larsen, Kasper},
  journal={SIAM Journal on Control and Optimization},
  volume={61},
  number={3},
  pages={1494--1512},
  year={2023},
  publisher={SIAM}
}

@article{CSLtarget,
  title={Information and trading targets in a dynamic market equilibrium},
  author={Choi, Jin Hyuk and Larsen, Kasper and Seppi, Duane J},
  journal={Journal of Financial Economics},
  volume={132},
  number={3},
  pages={22--49},
  year={2019},
  publisher={Elsevier}
}

@article{QZnewkyle,
  title={A New Approach for the Continuous Time Kyle-Back Strategic Insider Equilibrium Problem},
  author={Qiao, Bixing and Zhang, Jianfeng},
  journal={arXiv preprint arXiv:2506.12281},
  year={2025}
}

@article{ma2018kyle,
  title={Kyle--Back equilibrium models and linear conditional mean-field SDEs},
  author={Ma, Jin and Sun, Rentao and Zhou, Yonghui},
  journal={SIAM Journal on Control and Optimization},
  volume={56},
  number={2},
  pages={1154--1180},
  year={2018},
  publisher={SIAM}
}

@book{ladyzhenskaia,
  title={Linear and quasi-linear equations of parabolic type},
  author={Ladyzhenskaia, Olga Aleksandrovna and Solonnikov, Vsevolod Alekseevich and Ural'tseva, Nina N},
  volume={23},
  year={1968},
  publisher={American Mathematical Soc.}
}
\end{document}